\documentclass[11pt]{amsart}

\usepackage{hyperref}
\usepackage{ucs}
\usepackage[utf8]{inputenc}
\usepackage{amsmath}
\usepackage{amssymb}
\usepackage{amsthm}
\usepackage[english]{babel}
\usepackage{fontenc}
\usepackage{graphicx, tikz}
\usepackage{tikz-cd}
\usetikzlibrary{arrows}
\usepackage{amscd}

\author{Johannes Schmitt}
\title{Dimension theory of the moduli space of twisted $k$-differentials}
\address{Departement Mathematik, ETH Z\"urich, R\"amistrasse 101, 8092 Z\"urich, Switzerland}
\email{johannes.schmitt@math.ethz.ch}
\date{\today}

\renewcommand{\dim}{\text{dim}}

\newcommand{\todo}[1]{}
\newcommand{\todoOld}[1]{}
\newcommand{\todoAlt}[1]{}
\newcommand{\todoFin}[1]{}

\newcommand{\diviso}{\text{div}}

\newcommand{\comment}[1]{}

\newcommand{\detex}[1]{}  


\begin{document}
\begin{abstract}
 In this note we extend the dimension theory for the spaces $\widetilde{\mathcal H}_g^k(\mu)$ of twisted $k$-differentials defined by Farkas and Pandharipande in \cite{fp} to the case $k>1$. In particular, we show that the intersection $\widetilde{\mathcal H}_g^k(\mu) \cap \mathcal{M}_{g,n}$ is a union of smooth components of the expected dimensions for all $k\geq 0$. We also extend a conjectural formula from \cite{fp} for a weighted fundamental class of $\widetilde{\mathcal H}_g^k(\mu)$ and provide evidence in low genus.
\end{abstract}

 \maketitle

 \newtheoremstyle{test}
  {}
  {}
  {\it}
  {}
  {\bfseries}
  {.}
  { }
  {}
 
 \theoremstyle{test}
\newtheorem{Def}{Definition}[section]
\newtheorem{Exa}[Def]{Example}
\newtheorem{Rmk}[Def]{Remark}
\newtheorem{Exe}[Def]{Exercise}
\newtheorem{Theo}[Def]{Theorem}
\newtheorem{Lem}[Def]{Lemma}
\newtheorem{Cor}[Def]{Corollary}
\newtheorem{Pro}[Def]{Proposition}
\newtheorem*{Rmk*}{Remark}   
\newtheorem*{Exa*}{Example}   
\newtheorem*{Pro*}{Proposition} 
\newtheorem*{Def*}{Definition}
\newtheorem*{Cor*}{Corollary}
\newtheorem*{Lem*}{Lemma}
\newtheorem*{Theo*}{Theorem}

\section{Introduction}
In \cite{fp}, Farkas and Pandharipande define the moduli space $\widetilde{\mathcal H}_{g}^k(\mu)$ of twisted $k$-differentials associated to an integer partition $\mu=(m_1, \ldots, m_n)$ of $k(2g-2)$. It is a closed substack $\widetilde{\mathcal H}_{g}^k(\mu)\subset \overline{\mathcal{M}}_{g,n}$, whose interior points ${\mathcal H}_{g}^k(\mu) = \widetilde{\mathcal H}_{g}^k(\mu) \cap {\mathcal{M}}_{g,n}$ parametrize curves $(C,p_1, \ldots, p_n)$ such that there exists a meromorphic section $s_0$ of $\omega_C^{\otimes k}$ with zeroes and poles at $p_1, \ldots, p_n$ with orders prescribed by $\mu$. That is $\text{div}(s_0) = \sum_{i=1}^n m_i p_i$ or equivalently
\[\mathcal{O}_C\left(\sum_{i=1}^n m_i p_i\right)= \omega_C^{\otimes k}.\]
On the boundary of $\overline{\mathcal{M}}_{g,n}$, this definition is modified by allowing to introduce twists $I(q,D')=-I(q,D'') \in \mathbb{Z}$ at nodes $q$ of $C$ where two distinct components $D',D''$ of $C$ meet. These twists need to satisfy certain combinatorial conditions. Let $\nu : C_I \to C$ be the partial normalization of $C$ resolving the collection $N_I$ of all nodes $q$  of $C$ with nonzero twist $I(q,D_q')$ and let $q', q''$ be the preimages of $q$ contained in the preimages of $D_q',D_q''$. Then $(C,p_1, \ldots, p_n)$ is called \emph{$k$-twisted canonical} and is contained in $\widetilde{\mathcal H}_{g}^k(\mu)$ iff we have an equality of line bundles
\[\nu^* \mathcal{O}_C\left(\sum_{i=1}^n m_i p_i\right) = \nu^*\omega_C^{\otimes k} \otimes \mathcal{O}_{C_I}\left( \sum_{q \in N_I} I(q,D_q') q' + I(q,D_q'') q''\right)\]
on $C_I$. For details see \cite[Section 0 and Definition 20]{fp}. See also \cite{guere} for an approach to $k$-twisted canonical divisors via log geometry.

For $k=1$, the dimension theory of $\widetilde{\mathcal H}_{g}^k(\mu)$ has been studied in \cite{fp}. 
\begin{itemize}
 \item In the holomorphic case, i.e. if all $m_i \geq 0$, the components of  the closure $\overline{\mathcal H}_{g}^1(\mu)$ of ${\mathcal H}_{g}^1(\mu)$ have dimension $2g-2+n$ and all other components of $\widetilde{\mathcal H}_{g}^1(\mu)$ have dimension $2g-3+n$. 
 \item In the strictly meromorphic case, i.e. if there exists some $m_i<0$, all components of $\widetilde{\mathcal H}_{g}^1(\mu)$ have dimension $2g-3+n$. 
\end{itemize}
In the case $k=0$, the space $\widetilde{\mathcal H}_{g}^k(\mu)$ has components of various dimensions, all at least $2g-3+n$.

Extending the result for  $k=1$ above, we prove the following. 

\begin{Theo} \label{Theo:dim}
 For $k \geq 1$, all components of $\widetilde{\mathcal{H}}_g^k(\mu)$ are of dimension $2g-3+n$, except in the case that all parts of $\mu$ are nonnegative and divisible by $k$. Then the sublocus 
 \[\mathcal{\overline  H}_g^1\left(\frac{1}{k} \mu \right) \subset \widetilde{\mathcal H}_g^k(\mu)\]
 is a union of irreducible components of dimension $2g-2+n$ and all other components of $\widetilde{\mathcal H}_g^k(\mu)$ have dimension $2g-3+n$.
\end{Theo}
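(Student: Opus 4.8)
The plan is to reduce the statement to a dimension estimate for strata of $k$-differentials on a single smooth curve, and to prove that estimate by comparison with the $k=1$ case via the multiplication-by-$k$ isogeny on the universal Jacobian. The lower bound is routine: as in \cite{fp}, every irreducible component of $\widetilde{\mathcal H}_g^k(\mu)$ has dimension at least $2g-3+n$ --- for components meeting $\mathcal{M}_{g,n}$ by the excess-intersection bound for the zero locus of the section $\mathcal{O}(\sum_i m_i p_i)\otimes\omega_C^{\otimes -k}$ of the universal degree-$0$ Picard scheme over $\mathcal M_{g,n}$ (relative dimension $g$), and for the boundary components by the dimension count of the strata $\widetilde{\mathcal H}_g^k(\mu)_{\Gamma,I}$ of loc.\ cit., which does not involve $k$. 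So the content is the matching upper bound together with the identification of the large components. For the upper bound, let $Z$ be a component with generic point in the stratum $\mathcal M_\Gamma\subseteq\overline{\mathcal M}_{g,n}$ of curves with dual graph $\Gamma$, and fix a twist $I$ with $Z\subseteq\overline{\widetilde{\mathcal H}_g^k(\mu)_{\Gamma,I}}$. Pulling the $k$-twisted-canonical equation back along the normalization and restricting to each irreducible component $C_v$ of the generic curve --- using $\omega_C|_{C_v}=\omega_{C_v}(\sum_{q\text{ node on }C_v}q_v)$ --- the equation decouples into the conditions $(C_v,\vec p_v)\in\mathcal H_{g_v}^k(\mu_v)$, where $\mu_v$ is a length-$n_v$ partition of $k(2g_v-2)$ (the degree being forced by the combinatorial constraints on $I$) with part $m_i$ at a marked point $p_i$ on $C_v$ and part $-k-I(q,C_v)$ at a node $q$ on $C_v$ (with $I(q,C_v)=0$ at a branch not in $N_I$). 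Hence
\[
\dim Z\ \le\ \sum_{v\in V(\Gamma)}\dim\mathcal H_{g_v}^k(\mu_v).
\]

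The heart of the argument is the following estimate for smooth curves, which is then applied at each vertex $C_v$. \emph{For every $g'\ge 0$, $n'\ge 1$ and every length-$n'$ partition $\mu'$ of $k(2g'-2)$ one has $\dim\mathcal H_{g'}^k(\mu')\le 2g'-3+n'$, unless every part of $\mu'$ is nonnegative and divisible by $k$ --- call such $\mu'$ $k$-canonical --- in which case $\mathcal H_{g'}^k(\mu')$ is the union of $\mathcal H_{g'}^1(\mu'/k)$, of pure dimension $2g'-2+n'$ if nonempty, and a closed subset of dimension $\le 2g'-3+n'$.} To prove this one realizes $\mathcal H_{g'}^k(\mu')$ as the zero locus of the section $\sigma_k=\mathcal O(\sum_i m_i' p_i)\otimes\omega_C^{\otimes -k}$ of the universal degree-$0$ Picard scheme $\mathcal J\to\mathcal M_{g',n'}$. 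When $\mu'$ is $k$-canonical, $\sigma_k=[k]\circ\sigma_1$ with $\sigma_1=\mathcal O(\sum_i(m_i'/k)p_i)\otimes\omega_C^{\otimes -1}$ and $[k]\colon\mathcal J\to\mathcal J$ the multiplication-by-$k$ isogeny; since $[k]$ is \'etale, $\sigma_k$ and $\sigma_1$ have the same vertical derivative up to the automorphism given by multiplication by $k$, so $\mathcal H_{g'}^k(\mu')$ is smooth of the expected codimension $g'$ at every point where $\sigma_1$ meets the corresponding sheet of the $k$-torsion subscheme $\mathcal J[k]$ transversally. On the identity sheet this is $\mathcal H_{g'}^1(\mu'/k)$, whose dimension is supplied by \cite{fp}; on the other sheets the role of $\omega_C$ is played by $\omega_C\otimes\tau$ with $\tau$ a nontrivial torsion bundle, and the exceptional dimension disappears because the ``$+1$'' of \cite{fp} comes precisely from $h^0(\omega_C)=g'>g'-1=h^0(L)$ for a generic line bundle $L$ of degree $2g'-2$, an equality which fails for $\omega_C\otimes\tau$ with $\tau\neq\mathcal O$. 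If $\mu'$ is not $k$-canonical --- some part negative, or some part not divisible by $k$ --- the same mechanism (no meromorphic $k$-th root of the relevant differential available) yields $\dim\le 2g'-3+n'$ directly, and the cases $g'\le 1$ are trivial. Reproving the Farkas--Pandharipande estimate in these torsion-twisted and non-$k$-divisible situations --- i.e.\ showing the exceptional dimension is confined to the untwisted canonical case --- is the main obstacle. (Alternatively one may pass to the canonical $k$-cyclic cover of $(C,\mu')$ and invoke the $k=1$ result there; the isogeny argument is cleaner as it stays on $C$.)

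Granting the key estimate, combine it with the displayed inequality using $2\sum_v g_v=2(g-E+V-1)$ and $\sum_v n_v=n+2E$ (with $V,E$ the numbers of vertices and edges of $\Gamma$) to obtain
\[
\dim Z\ \le\ 2g-2+n-V+\#\{v:\mu_v\text{ is }k\text{-canonical}\}.
\]
At a node $q$ on $C_v$ the part of $\mu_v$ equals $-k-I(q,C_v)$, and $I(q,C_v)+I(q,C_{v'})=0$ along an edge (the two branch-parts at a self-node summing to $-2k$), so the two branches of any edge cannot both contribute a nonnegative part; hence no vertex incident to an edge has $\mu_v$ $k$-canonical, and for $\Gamma$ nontrivial $\#\{v:\mu_v\text{ is }k\text{-canonical}\}\le V-1$, giving $\dim Z\le 2g-3+n$. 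Together with the lower bound, every component of $\widetilde{\mathcal H}_g^k(\mu)$ whose generic point lies in the boundary has dimension exactly $2g-3+n$. A component whose generic point lies in $\mathcal M_{g,n}$ is the closure of a component of $\mathcal H_g^k(\mu)$, so the key estimate gives it dimension $2g-3+n$, unless $\mu$ is $k$-canonical and the component is one of $\overline{\mathcal H}_g^1(\mu/k)$, of dimension $2g-2+n$. Finally $\overline{\mathcal H}_g^1(\mu/k)\subseteq\widetilde{\mathcal H}_g^k(\mu)$, since the latter is closed in $\overline{\mathcal M}_{g,n}$ and contains $\mathcal H_g^1(\mu/k)$ (the $k$-th tensor power of a $1$-differential of type $\mu/k$ being a $k$-differential of type $\mu$); by \cite{fp} it is pure of dimension $2g-2+n$, hence strictly larger-dimensional than every other component, hence contained in none of them, hence a union of irreducible components of $\widetilde{\mathcal H}_g^k(\mu)$ --- and all other components have dimension $2g-3+n$, as asserted.
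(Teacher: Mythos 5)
Your global structure reproduces the paper's reduction: the lower bound $\ge 2g-3+n$ from \cite{fp}, the decoupling of boundary components along the dual graph into vertex conditions $(C_v,\vec p_v)\in\mathcal{H}_{g_v}^k(\mu_v)$ followed by the combinatorial count (this is essentially the proof of Proposition 7 of \cite{fp}, which the paper also invokes), and the identification of the exceptional components with $\overline{\mathcal{H}}_g^1(\mu/k)$. One slip in the combinatorics: the claim that \emph{no} vertex incident to an edge has $\mu_v$ $k$-canonical is false --- the outlying vertices of the star graphs carry node-parts $I(e)-k\ge 0$ divisible by $k$ and are exactly the $k$-canonical ones, which is why they contribute factors of dimension $2g_v-2+n_v$. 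What your ``sum of branch-parts is $-2k$'' argument actually gives is that at least one endpoint of every edge fails to be $k$-canonical, hence at most $V-1$ canonical vertices; that weaker statement suffices for your displayed bound, so this is only an expository error.

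The genuine gap is the ``key estimate'' for smooth curves, which carries the whole theorem and which you do not prove. In the $k$-canonical case you factor $\sigma_k=[k]\circ\sigma_1$ and split $\mathcal{H}_{g'}^k(\mu')$ into the identity sheet $\mathcal{H}_{g'}^1(\mu'/k)$ (covered by \cite{fp}) and torsion-twisted sheets where $\mathcal{O}(\sum_i (m_i'/k)p_i)\cong\omega_C\otimes\tau$ with $\tau$ nontrivial; in the non-divisible and meromorphic cases you appeal to ``the same mechanism''; and you then concede that establishing the Farkas--Pandharipande-type bound in the torsion-twisted and non-$k$-divisible situations ``is the main obstacle.'' That concession is exactly right: the heuristic $h^0(\omega_C)=g>g-1=h^0(\omega_C\otimes\tau)$ explains why no excess dimension is expected, but it is not an argument (the interior estimate of \cite{fp} is itself a nontrivial deformation/degeneration result and does not formally apply to $\omega_C\otimes\tau$; in the non-divisible case the isogeny factorization is not even available; and the transversality you invoke is precisely what has to be proved --- note it fails on the identity sheet, where the excess dimension is $1$). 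This missing estimate is Proposition \ref{Pro:dimTC} of the paper, whose proof is the bulk of Section 2: the tangent space at $(C,p)\in\mathcal{H}_g^k(\mu)$ is identified via the fibre-product description over the Jacobian and reduced to the cokernel of $H^1(\varphi)$ for $\varphi(v)=\frac{1}{s_0}L_v(s_0)$; the kernel of $\varphi$ is computed as a compactly supported rank-one local system and the cokernel as a skyscraper at the poles of order divisible by $k$; $H^2(\mathcal{K})$ is evaluated by duality (nonzero exactly when $s_0$ admits a meromorphic $k$-th root); and in the meromorphic case with a root, surjectivity of $H^0(\mathcal{N})\to H^2(\mathcal{K})$ is proved by an explicit monodromy/residue computation. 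Neither this analysis nor a worked-out substitute (e.g.\ the canonical $k$-cyclic cover with period coordinates, which you mention only parenthetically) appears in your proposal, so the argument is incomplete at its central step.
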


This was conjectured in \cite{fp}. We are going to see that the crucial point in the argument is the dimension theory of the open locus ${\mathcal{H}}_g^k(\mu)$. 

\begin{Pro} \label{Pro:dimTC}
 Let $k \geq 0$, $(C,p) \in \mathcal{H}_g^k(\mu)$ and let $s_0$ be a meromorphic section of $\omega_C^{\otimes k}$ with $\diviso(s_0)=\sum_{i=1}^n m_i p_i$. Consider the condition
 \begin{equation} \label{eqn:condhol}
  \parbox{0.8\linewidth}{all $m_i$ are nonnegative and divisible by $k$ and there exists a meromorphic section $\tilde s_0$ of $\omega_C$ such that $s_0 = \tilde s_0^{\otimes k}$.}
   \tag{*}
 \end{equation}
 Then
 \[\dim\ T_{(C,p)} \mathcal{H}_g^k(\mu) = 2g-2+n,\]
 if (\ref{eqn:condhol}) is satisfied and 
 \[\dim\ T_{(C,p)} \mathcal{H}_g^k(\mu) = 2g-3+n\]
 otherwise. Here we require that if $k=0$, the partition $\mu$ should not be the trivial partition $\mu=(0, \ldots, 0)$ (otherwise $\mathcal{H}_g^0(\mu)=\mathcal{M}_{g,n}$).
 
 In particular, all components of $\mathcal{H}_g^k(\mu)$ are smooth of the dimensions determined above.
\end{Pro}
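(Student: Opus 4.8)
The plan is to view $\mathcal{H}_g^k(\mu)$ as the locus in $\mathcal{M}_{g,n}$ over which the line bundle $\omega_C^{\otimes k}\otimes\mathcal{O}_C(-\sum_i m_ip_i)$ becomes trivial. If $\pi\colon\mathcal{C}\to\mathcal{M}_{g,n}$ is the universal curve with its sections $\sigma_i$, the assignment $(C,p)\mapsto[\omega_C^{\otimes k}\otimes\mathcal{O}_C(-\sum_i m_ip_i)]$ defines a morphism $s\colon\mathcal{M}_{g,n}\to\operatorname{Pic}^0_{\mathcal{C}/\mathcal{M}_{g,n}}$ with $\mathcal{H}_g^k(\mu)=s^{-1}(e)$, the preimage of the zero section $e$. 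Since $\operatorname{Pic}^0_{\mathcal{C}/\mathcal{M}_{g,n}}\to\mathcal{M}_{g,n}$ is smooth of relative dimension $g$ with vertical tangent space $H^1(C,\mathcal{O}_C)$ at $e(C,p)$, the tangent space $T_{(C,p)}\mathcal{H}_g^k(\mu)$ is the kernel of
\[\phi\colon T_{(C,p)}\mathcal{M}_{g,n}=H^1(C,T_C(-\textstyle\sum_i p_i))\longrightarrow H^1(C,\mathcal{O}_C),\]
the composite of $ds$ with the projection onto the vertical tangent space. Hence $\dim T_{(C,p)}\mathcal{H}_g^k(\mu)=(3g-3+n)-\operatorname{rk}\phi=(2g-3+n)+\dim\operatorname{coker}\phi$, and the whole Proposition reduces to showing that $\dim\operatorname{coker}\phi$ equals $1$ when (\ref{eqn:condhol}) holds and $0$ otherwise, together with a formal deduction of smoothness at the end.

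To compute $\phi$ I would pass to Čech cocycles. A first-order deformation $\theta\in H^1(C,T_C(-\sum_i p_i))$ is represented by local vector fields $\theta_{\alpha\beta}$ vanishing at the $p_i$; these glue the universal curve to first order, and $\phi(\theta)$ is precisely the obstruction to extending the nowhere-vanishing section $s_0$ of $\omega_C^{\otimes k}\otimes\mathcal{O}_C(-\sum_i m_ip_i)$ over this deformation. Working this out gives
\[\phi(\theta)=\bigl[\,\mathcal{L}_{\theta_{\alpha\beta}}(s_0)/s_0\,\bigr]\in H^1(C,\mathcal{O}_C),\]
where $\mathcal{L}$ is the Lie derivative of the meromorphic $k$-differential $s_0$; the quotient is $\mathcal{O}_C$-valued exactly because $\theta_{\alpha\beta}$ vanishes at the $p_i$ while $\operatorname{div}(s_0)=\sum_i m_ip_i$. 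In a local coordinate $z$ with $s_0=f\,(dz)^k$ and $\theta_{\alpha\beta}=v\,\partial_z$ one has $\mathcal{L}_{\theta_{\alpha\beta}}(s_0)/s_0=v f'/f+k v'$.

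Next I would dualize. By Serre duality $\operatorname{coker}(\phi)^{\vee}=\ker\bigl(\phi^{\vee}\colon H^0(C,\omega_C)\to H^0(C,\omega_C^{\otimes 2}(\sum_i p_i))\bigr)$, and unwinding the residue pairing (integrating by parts in the $v'$-term) shows that $\phi^{\vee}$ sends $\omega=g\,dz$ to the quadratic differential given locally by $\frac{gf'-k g'f}{f}(dz)^2$. Using $\mathcal{O}_C(\sum_i m_ip_i)\cong\omega_C^{\otimes k}$ one checks that the numerator $gf'-kg'f$ vanishes to order $m_i-1$ at $p_i$, so this indeed lands in $H^0(C,\omega_C^{\otimes 2}(\sum_i p_i))=T^{\vee}_{(C,p)}\mathcal{M}_{g,n}$. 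Now $\phi^{\vee}(\omega)=0$ iff $gf'-kg'f\equiv 0$, i.e.\ iff $\frac{d}{dz}(f/g^k)\equiv 0$, i.e.\ iff the meromorphic function $s_0/\omega^{\otimes k}$ on $C$ is constant. Thus $\ker\phi^{\vee}\neq 0$ exactly when there is a nonzero $\omega\in H^0(C,\omega_C)$ with $s_0=c\,\omega^{\otimes k}$ for some $c\in\mathbb{C}^{*}$; comparing divisors this forces all $m_i\ge 0$ and $k\mid m_i$, and $\tilde s_0:=c^{1/k}\omega$ exhibits $s_0$ as a $k$-th power, i.e.\ (\ref{eqn:condhol}) holds. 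Conversely, if (\ref{eqn:condhol}) holds then $\tilde s_0\in\ker\phi^{\vee}$, and since any $\omega\in\ker\phi^{\vee}$ has $\omega^{\otimes k}$ a constant multiple of $\tilde s_0^{\otimes k}$ we get $\ker\phi^{\vee}=\mathbb{C}\cdot\tilde s_0$, one-dimensional. (If $g=0$ both conditions are empty, since $H^0(C,\omega_C)=0$ and $\sum_i m_i=k(2g-2)<0$ precludes (\ref{eqn:condhol}); if $k=0$ with $\mu\neq(0,\dots,0)$ the local formula becomes $g f'(dz)^2$, which vanishes iff $s_0$ is a constant function, i.e.\ iff $\mu=(0,\dots,0)$, so $\ker\phi^{\vee}=0$.)

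It remains to deduce smoothness. The computation shows $\dim T_x\mathcal{H}_g^k(\mu)=2g-2+n$ if $x$ lies on the (\ref{eqn:condhol})-locus $L:=\mathcal{H}^1_g(\tfrac1k\mu)\subseteq\mathcal{M}_{g,n}$, and $2g-3+n$ otherwise. Since $L\subseteq\mathcal{H}_g^k(\mu)$ (via $\tilde s_0\mapsto\tilde s_0^{\otimes k}$) and $L$, when nonempty, is pure of dimension $2g-2+n$ by the $k=1$ case of \cite{fp}, while $\mathcal{H}_g^k(\mu)=s^{-1}(e)$ is locally cut out by $g$ equations and so has all components of dimension $\ge 2g-3+n$, a squeeze gives $\dim_x\mathcal{H}_g^k(\mu)=\dim T_x\mathcal{H}_g^k(\mu)$ at every point $x$: it equals $2g-2+n$ on $L$ (forcing $\mathcal{H}_g^k(\mu)$ to coincide with $L$ there) and $2g-3+n$ off $L$. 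Hence $\mathcal{H}_g^k(\mu)$ is smooth, the components of $L$ (dimension $2g-2+n$) are among its components, and the remaining components are smooth of dimension $2g-3+n$ and disjoint from $L$. The one genuinely nontrivial step is the identification of $\phi$ in the second paragraph --- i.e.\ computing the derivative of the map $s$ while accounting simultaneously for the variation of $\omega_C^{\otimes k}$ and of the divisor $\sum_i m_ip_i$ --- after which everything is an explicit residue computation together with the elementary observation that $gf'=kg'f$ forces $s_0/\omega^{\otimes k}$ to be constant.
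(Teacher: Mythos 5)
Your argument is correct, and after the reduction step it follows a genuinely different route from the paper's. The first half coincides with the paper (and with Polishchuk's method): you realize $\mathcal{H}_g^k(\mu)$ as the preimage of the zero section of a relative Jacobian/Picard bundle and identify the relevant tangent map with $H^1$ of the first-order operator $v \mapsto L_v(s_0)/s_0$, locally $v\mapsto vf'/f+kv'$, which is exactly the paper's sheaf map $\varphi$. At that point the paper analyzes the kernel and cokernel of $\varphi$ as sheaves ($\mathcal{K}$ an extension by zero of a rank-one local system, $\mathcal{N}$ a skyscraper on the markings with negative weight divisible by $k$), computes $H^2(\mathcal{K})$ via compactly supported cohomology, and in the strictly meromorphic case with a $k$-th root needs a separate Čech/monodromy computation (Lemma \ref{Lem:compononzero}) to show a connecting map is nonzero. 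You instead dualize: Serre duality turns the cokernel into the kernel of the formal adjoint $H^0(\omega_C)\to H^0(\omega_C^{\otimes 2}(E))$, $g\,dz\mapsto \frac{gf'-kg'f}{f}(dz)^2$, and the whole case analysis collapses to the elementary observation that $gf'=kg'f$ forces $s_0/\omega^{\otimes k}$ to be constant; this treats the holomorphic and meromorphic cases uniformly and avoids the local-system and residue arguments entirely. (The paper explicitly notes that Mondello's unpublished proof takes exactly this duality route, so you have reconstructed it.) Two small points to tighten: since $\varphi$ is not $\mathcal{O}_C$-linear, the statement that the Serre dual of $H^1(\varphi)$ is $H^0$ of the formal adjoint characterized by $\varphi(v)\cdot\omega - v\cdot\varphi^{\vee}(\omega)$ being an exact local differential should be stated or referenced rather than just asserted via ``integrating by parts''; and in your $k=0$ aside the adjoint is $\frac{gf'}{f}(dz)^2$, not $gf'(dz)^2$ (immaterial for the kernel). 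Your smoothness deduction, squeezing the local dimension between the lower bound from the $g$ local equations and the tangent-space dimension, and using the known $k=1$ purity of $\mathcal{H}_g^1(\tfrac1k\mu)$ at points where (\ref{eqn:condhol}) holds, is legitimate and not circular, and is essentially what the paper leaves implicit.
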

This result has been known in parts of the community, but to our knowledge it has not been published. 
We are for instance aware of a proof by G. Mondello (G. Mondello, personal communication with R. Pandharipande, November 2015). Similar to our proof below, it first uses deformation theory to reduce the claim to the computation of the cokernel of a map of first cohomology groups of sheaves on $C$ induced by a sheaf map. While Mondello uses Serre duality to reduce to a computation in $H^0$, we analyze kernel and cokernel of the map of sheaves directly and then use long exact sequences. The deformation theoretic part of our argument below follows closely the proof of Polishchuk in \cite{polishchuk}, who proved the result for $k=1$ and $\mu$ holomorphic. \todo{Verify date and overall appropriateness of acknowledgement}

In \cite{bcggm} the question if a given twisted differential $(C,p_1, \ldots, p_n) \in \widetilde{\mathcal H}_{g}^k(\mu)$ is actually contained in the closure $\overline{\mathcal H}_{g}^k(\mu)$ is answered in terms of orders and residue conditions on the dual graph of $C$ in the case $k=1$. The question for $k>1$ is still open (though a solution is announced in \cite{bcggm}) and we do not address it in the following.

In Section \ref{Sect:conjecture} we present a generalized version of a conjecture from \cite{fp}, which  relates a fundamental class of $\widetilde{\mathcal H}_{g}^k(\mu)$ with explicit weights on the components inside the boundary to a tautological class described by Pixton in \cite{pixtondr}. 
Here we must distinguish cases.
\begin{itemize}
 \item If $\mu$ has an entry which is negative or not divisible by $k$, all components of $\widetilde{\mathcal H}_{g}^k(\mu)$ are of codimension $g$ and we obtain a straightforward generalization of the conjectural formula in \cite{fp} (see Conjecture A).
 \item If $\mu=k \mu'$ for $\mu'$ nonnegative, we replace the codimension $g-1$ contribution $\overline{\mathcal H}_{g}^1(\mu')$ of this formula by a codimension $g$ virtual fundamental class $[\overline{\mathcal H}_{g}^1(\mu')]^{\text{vir}}$. This virtual cycle is obtained by inserting the parameters $k=1$ and $\mu'$ in the formula of Conjecture A  (see Conjecture A$'$).
\end{itemize}
We check both conjectures for genera $g=0,1$ and for each conjecture two nontrivial cases in genus $g=2$. We also show how for all $k \geq 1$  the class $[\overline{\mathcal{H}}_g^k(\mu)]$ of the closure of $\mathcal{H}_g^k(\mu)$ is determined by Conjecture A in the case where $\mathcal{H}_g^k(\mu)$ has pure codimension $g$, i.e. for $\mu \neq k \mu'$ with $\mu'$ nonnegative.

In the appendix of the paper, we give an elementary argument, explained to us by Dimitri Zvonkine, showing that an untwisted node of a curve $(C,p) \in \widetilde{\mathcal H}_{g}^k(\mu)$ can be smoothed inside $\widetilde{\mathcal H}_{g}^k(\mu)$. This technical result is needed to identify the boundary components of $\widetilde{\mathcal H}_{g}^k(\mu)$ in terms of their dual graphs.

\section*{Acknowledgements}
I would like to thank my advisor Rahul Pandharipande for introducing me to the topic of twisted differentials and for his guidance throughout the writing of this paper. I am also grateful to  Dimitri Zvonkine for explaining the proof in the appendix and to Aaron Pixton for verifying the tautological relations appearing in the examples in Section \ref{Sect:ExaGen2}. Finally, I want to thank Felix Janda for very helpful discussions and advice and for his comments on a preliminary version of the paper.

I am supported by the grant SNF-200020162928.

\section{Dimension estimates}
Let $g,n,k \geq 0$ and assume $2g-2+n >0$. Let $\mu = (m_1, \ldots, m_n) \in \mathbb{Z}^n$ be a partition of $k(2g-2)$. 

For the proof of Theorem \ref{Theo:dim}, many of the dimension estimates for the case $k=1$ in \cite{fp} carry over verbatim to the case of general $k$. By \cite[Theorem 21]{fp} we know that all components of $\widetilde{\mathcal H}_g^k(\mu)$ have dimension at least $2g-3+n$. 

Now assuming Proposition \ref{Pro:dimTC}, we know that the dimension of $\mathcal{H}_g^k(\mu)$ is bounded above by $2g-2+n$ and in the meromorphic case it is even bounded by $2g-3+n$. Using this, the proof of \cite[Proposition 7]{fp} shows the following.

\begin{Cor} \label{Cor:bdrycompdim}
 For $k \geq 1$, every irreducible component $Z$ of $\widetilde{\mathcal H}_g^k(\mu)$ supported in the boundary has dimension at most $2g-3+n$. Here equality can be achieved only if the dual graph of a generic element $(C,p) \in Z$ is a star-graph (see Section \ref{Sect:Hgkmu}).
\end{Cor}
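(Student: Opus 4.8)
The plan is to carry out the dimension count from the proof of \cite[Proposition 7]{fp}, now with Proposition \ref{Pro:dimTC} as input. Fix a boundary component $Z$, a generic point $(C,p) \in Z$, and a twist $I$ exhibiting $(C,p)$ as $k$-twisted canonical. First I would reduce to the situation where every node of $C$ is twisted: by the appendix an untwisted node of $(C,p)$ can be smoothed inside $\widetilde{\mathcal H}_g^k(\mu)$, which would place $Z$ strictly inside the closure of the smoothed locus and contradict that $Z$ is an irreducible component. Since a self-node carries no twist, it can likewise be smoothed, so the dual graph $\Gamma$ of $C$ is loopless, $C_I \to C$ is the full normalization, and its components $\widetilde C_v$ (for $v \in V(\Gamma)$) are smooth of genera $g_v$; stability of $C$ gives $2g_v - 2 + n_v > 0$, where $n_v$ is the number of marked points on $\widetilde C_v$ plus the number of edges at $v$.

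Next I would restrict the defining line-bundle identity of a $k$-twisted canonical divisor to each $\widetilde C_v$. Using $\omega_C|_{\widetilde C_v} \cong \omega_{\widetilde C_v}(\sum_q q')$, where $q'$ runs over the preimages on $\widetilde C_v$ of the nodes incident to $v$, this identity becomes
\[
\omega_{\widetilde C_v}^{\otimes k} = \mathcal O_{\widetilde C_v}\Big(\textstyle\sum_{p_i \in \widetilde C_v} m_i p_i \;-\; \sum_q (k + I(q,\widetilde C_v))\, q'\Big),
\]
so $(\widetilde C_v, \text{special points})$ lies in $\mathcal H_{g_v}^k(\mu_v)$ for the partition $\mu_v$ of $k(2g_v - 2)$ with entry $m_i$ at $p_i$ and $-(k + I(q,\widetilde C_v))$ at each node preimage $q'$. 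For a fixed pair $(\Gamma, I)$ the corresponding locus of $\widetilde{\mathcal H}_g^k(\mu)$ is a finite quotient of $\prod_v \mathcal H_{g_v}^k(\mu_v)$, and since only finitely many twists are admissible for a given $\Gamma$ this gives $\dim Z \le \sum_v \dim \mathcal H_{g_v}^k(\mu_v)$. By Proposition \ref{Pro:dimTC} (applicable since $2g_v-2+n_v>0$) each summand equals $2g_v - 2 + n_v$ if $\mu_v$ satisfies (\ref{eqn:condhol}) and $2g_v - 3 + n_v$ otherwise; in particular a strictly meromorphic $\mu_v$ contributes $2g_v - 3 + n_v$. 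Using $\sum_v g_v = g - |E(\Gamma)| + |V(\Gamma)| - 1$ and $\sum_v n_v = n + 2|E(\Gamma)|$ one finds $\sum_v (2g_v - 2 + n_v) = 2g - 2 + n$, hence
\[
\dim Z \;\le\; 2g - 2 + n - \#\{\, v : \mu_v \text{ does not satisfy } (\ref{eqn:condhol}) \,\}.
\]

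To finish I would invoke the matching of twists at a node: at a node $q$ joining $\widetilde C_v$ and $\widetilde C_w$, the two preimage entries $-(k + I(q,\widetilde C_v))$ and $-(k + I(q,\widetilde C_w))$ sum to $-2k < 0$ because $I(q,\widetilde C_v) = -I(q,\widetilde C_w)$, so one of them is negative; thus $\mu_v$ or $\mu_w$ is strictly meromorphic and in particular violates (\ref{eqn:condhol}). Since $Z$ is in the boundary, $C$ has a node, so $\dim Z \le 2g - 3 + n$. If equality holds, exactly one vertex $v_0$ violates (\ref{eqn:condhol}); as every node has a strictly meromorphic — hence (\ref{eqn:condhol})-violating — side, every node must be incident to $v_0$, and since $\Gamma$ is loopless this means $\Gamma$ is a star-graph with center $v_0$.

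I expect the main obstacle to be the bookkeeping I have passed over: that for fixed $(\Gamma, I)$ the boundary locus has dimension exactly $\sum_v \dim \mathcal H_{g_v}^k(\mu_v)$ — that is, that the $k$-twisted canonical condition decouples over the components and that gluing the $\widetilde C_v$ back together contributes no extra moduli — and that a generic $(C,p) \in Z$ is compatible with only finitely many twists $I$. Both are handled for $k = 1$ in \cite{fp}, and those arguments transfer to $k \ge 1$ verbatim.
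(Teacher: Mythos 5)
Your argument is correct and is essentially the proof the paper intends: it runs the dimension count from the proof of \cite[Proposition 7]{fp} with Proposition \ref{Pro:dimTC} as the new input (normalize at the twisted nodes, apply the vertex-wise bounds, observe that each node forces a strictly negative entry on one side so at least one vertex violates (\ref{eqn:condhol})), together with the smoothing result of the appendix to dispose of untwisted nodes, which is exactly how the paper and Section \ref{Sect:Hgkmu} arrive at conditions (a)--(c). The only step to tighten is the justification of your initial reduction: instead of saying $Z$ would lie ``strictly inside the closure of the smoothed locus'' (that closure is only one-dimensional), choose the generic $(C,p)$ outside all other components, note that the smoothing family then stays in $Z$ near $t=0$, and conclude via upper semicontinuity of the number of nodes that $Z$ would contain points with fewer nodes than its generic point, a contradiction.
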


\begin{Rmk}
 Note that for $k=0$, the proof of \cite[Proposition 7]{fp} breaks down because for $\mu=(0, \ldots, 0)$ the codimension of $\mathcal{H}_g^0(\mu)$ is $0$ and not $g$.
\end{Rmk}
We see that Corollary \ref{Cor:bdrycompdim} covers all components of $\widetilde{\mathcal H}_g^k(\mu)$ supported in the boundary and Proposition \ref{Pro:dimTC} treats the components in the interior $\mathcal{M}_{g,n}$. Here we note that the $(C,p) \in \mathcal{H}_g^1(\frac{1}{k} \mu)$ are exactly the points of $\mathcal{H}_g^k(\mu)$ such that the corresponding meromorphic section $s_0$ has a $k$-th root in $\omega_C$. This finishes the proof of Theorem \ref{Theo:dim}, assuming Proposition \ref{Pro:dimTC}.

To show the Proposition, we adapt the original dimension estimate from \cite{polishchuk} by Polishchuk. Let $\mathcal{C} \to \mathcal{M}_{g,n}$ be the universal curve over $\mathcal{M}_{g,n}$ and let $\mathcal{J}^d$ be the relative Jacobian of degree $d$ over $\mathcal{M}_{g,n}$ for $d \in \mathbb{Z}$. Then $\mathcal{H}_g^k(\mu)$ can be defined as a fibre product involving the following morphisms:
\begin{itemize}
 \item $\sigma_k^\mu : \mathcal{M}_{g,n} \to \mathcal{J}^{k(2g-2)}$ sending $(C,p)$ to  $(C,p,\mathcal{O}_C(\sum_{i=1}^n m_i p_i))$,
 \item $c : \mathcal{M}_{g,n} \to \mathcal{J}^{2g-2}$ sending $(C,p)$ to $(C,p,\omega_C)$,
 \item $\psi_k : \mathcal{J}^{2g-2} \to \mathcal{J}^{k(2g-2)}$ sending $(C,p,L)$ to $(C,p,L^{\otimes k})$.
\end{itemize}
Then indeed the diagram
\begin{equation}
 \begin{tikzcd}
\mathcal{H}_g^k(\mu) \arrow{rr} \arrow{d} & & \mathcal{M}_{g,n} \arrow{d}{\sigma_k^\mu}\\
\mathcal{M}_{g,n} \arrow{r}{c} & \mathcal{J}^{2g-2} \arrow{r}{\psi_k} & \mathcal{J}^{k(2g-2)}
\end{tikzcd} 
\end{equation}
is cartesian. 

Let $(C,p)\in \mathcal{H}_g^k(\mu)$ and let $s_0$ be a meromorphic section of $\omega_C^{\otimes k}$ with $\diviso(s_0)=\sum_{i=1}^n m_i p_i$. The cartesian diagram above implies that the tangent space of $\mathcal{H}_g^k(\mu)$ at $(C,p)$ is the kernel of the map 
\begin{equation} \label{eqn:tangspace} T_{(C,p)} \mathcal{M}_{g,n} \oplus T_{(C,p)} \mathcal{M}_{g,n} \xrightarrow{(d \sigma_k^\mu, -d(\psi_k \circ c))} T_{(C,p,\omega_C^{\otimes k})} \mathcal{J}^{k(2g-2)}.\end{equation}
Let $d$ be the dimension of the cokernel of this map, then 
\begin{align*}
 \dim\ T_{(C,p)} \mathcal{H}_g^k(\mu) &= 2(3g-3+n)-(3g-3+n+g)+d\\ &= 2g-3+n+d.
\end{align*}
Thus we need to show that $d=1$ if (\ref{eqn:condhol}) holds and $d=0$ otherwise.

As in Polishchuk's paper, we can explicitly identify the above tangent spaces and the morphisms between them. We have
\begin{align*}
 T_{(C,p)} \mathcal{M}_{g,n} &= H^1(C,\mathcal{T}_C(-p_1 - \ldots - p_n)),\\
 T_{(C,p, L)} \mathcal{J}^d &= H^1(C,A_{L,p}),
\end{align*}
for $d \in \mathbb{Z}$ and $L$ a line bundle on $C$ of degree $d$. Here $A_{L,p}$ is the sheaf of differential operators $L \to L$ of order $\leq 1$ with vanishing symbol at $p_1, \ldots, p_n$. In the following, let $E=p_1 + \ldots + p_n$.
\begin{Lem} \label{Lem:Aisom}
 The tangent map of the morphism
 \[\mathcal{J}^0 \to \mathcal{J}^{k(2g-2)},\quad (C,p,L) \mapsto \left(C,p,L\left(\sum_i m_i p_i\right)\right) \]
 at $(C,p,\mathcal{O}_C)$ is the map $H^1(C, A_{\mathcal{O}_C,p}) \to H^1(C,A_{\omega_C^{\otimes k},p})$ induced by the isomorphism of sheaves
 \[A_{\mathcal{O}_C,p} \cong A_{\omega_C^{\otimes k},p}\]
 sending an operator $\partial' : \mathcal{O}_C \to \mathcal{O}_C$ to the operator
 \[\partial: \omega_C^{\otimes k} \to \omega_C^{\otimes k},\quad  s \mapsto s_0 \partial'\left(\frac{s}{s_0}\right).\]
\end{Lem}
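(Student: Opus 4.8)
The plan is to verify directly that the prescribed assignment $\partial' \mapsto \partial$, where $\partial(s) = s_0\,\partial'(s/s_0)$, is a well-defined isomorphism of sheaves $A_{\mathcal O_C,p} \cong A_{\omega_C^{\otimes k},p}$, and then to argue that this sheaf isomorphism is exactly the one that computes the tangent map of the displayed morphism of Jacobians. Since a morphism of line bundles on stacks/schemes over a base induces the expected map on tangent spaces of relative Jacobians via the identification $T_{(C,p,L)}\mathcal J^d = H^1(C, A_{L,p})$, once the isomorphism of sheaves is identified the statement about $H^1$ follows by functoriality of cohomology.

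First I would recall that, for a line bundle $L$, the sheaf $A_{L,p}$ of first-order differential operators $L \to L$ with vanishing symbol along $p_1,\dots,p_n$ sits in an exact sequence $0 \to \mathcal O_C \to A_{L,p} \to \mathcal T_C(-E) \to 0$, the sub-$\mathcal O_C$ being the order-zero operators (multiplication by functions) and the quotient recording the symbol. The key observation is that this sequence does not depend on $L$ itself but only on its existence; conjugation by a fixed meromorphic (or rational) section $s_0$ of $L' \otimes L^{-1}$ carries order $\leq 1$ operators to order $\leq 1$ operators and preserves the symbol, hence sends $A_{L,p}$ isomorphically to $A_{L',p}$. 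Concretely, with $L = \mathcal O_C$, $L' = \omega_C^{\otimes k}$ and $s_0$ the chosen meromorphic section of $\omega_C^{\otimes k}$ with $\diviso(s_0) = \sum_i m_i p_i$, the formula $\partial(s) = s_0 \partial'(s/s_0)$ makes sense locally: on an open set $U$ over which $s_0$ trivializes $\omega_C^{\otimes k}(-\sum m_i p_i)$, dividing by $s_0$ identifies $\omega_C^{\otimes k}|_U$ with $\mathcal O_U(\sum m_i p_i)$, and I would check that applying $\partial'$ and multiplying back by $s_0$ lands in $\omega_C^{\otimes k}|_U$, is $\mathcal O_U$-linear up to first order with the same symbol as $\partial'$, and in particular has vanishing symbol along $p$ precisely when $\partial'$ does. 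The inverse is $\partial \mapsto (t \mapsto s_0^{-1}\partial(s_0 t))$, so this is genuinely an isomorphism of sheaves (the poles and zeros of $s_0$ cancel in the conjugation), and it is clearly compatible with the above extensions, acting as the identity on the sub-$\mathcal O_C$ and on the quotient $\mathcal T_C(-E)$.

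For the tangent map claim, I would factor the morphism $\mathcal J^0 \to \mathcal J^{k(2g-2)}$, $(C,p,L)\mapsto (C,p,L(\sum m_i p_i))$ as tensoring with the fixed line bundle $\mathcal O_C(\sum m_i p_i)$ in each fibre; tensoring by a line bundle is an isomorphism of Picard schemes/stacks $\mathcal J^0 \to \mathcal J^{k(2g-2)}$ over $\mathcal M_{g,n}$, and its differential at $(C,p,\mathcal O_C)$ is induced by the tautological isomorphism $A_{\mathcal O_C,p} \cong A_{\mathcal O_C(\sum m_i p_i),p}$ given by conjugation by the canonical rational section $1$ of $\mathcal O_C(\sum m_i p_i)$. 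Finally I would identify $\mathcal O_C(\sum m_i p_i) \cong \omega_C^{\otimes k}$ via $s_0$ (this is exactly the defining condition of $(C,p)\in\mathcal H_g^k(\mu)$), under which the canonical section $1$ is carried to $s_0$; composing the two conjugations yields conjugation by $s_0$, i.e. precisely the formula in the statement.

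The only real subtlety — and the main thing to be careful about — is the bookkeeping of twists near the marked points: one must confirm that $s/s_0$ and $s_0\partial'(\cdot)$ land in the correct sheaf (that the zeros and poles of $s_0$ introduced by dividing are exactly undone by multiplying, including the interaction with the order-$\leq 1$ operator $\partial'$ which can only decrease the order of vanishing by one, matched by the symbol-vanishing condition at $p$), and that the identification of the abstract tangent space $T_{(C,p,L)}\mathcal J^d$ with $H^1(C,A_{L,p})$ is natural in $L$ so that functoriality applies. Both are local checks on $C$, so beyond this I expect the argument to be routine.
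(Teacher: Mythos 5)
Your proposal is correct and takes essentially the same route as the paper, which simply defers to Polishchuk's Lemma 2.2: the substance is the conjugation-by-$s_0$ isomorphism between $A_{\mathcal{O}_C,p}$ and $A_{\omega_C^{\otimes k},p}$, together with the local check at the markings that the vanishing of the symbol compensates the zeros and poles of $s_0$ --- exactly the remark the paper adds. The one step you assert rather than verify (that the differential of the relative twisting morphism is computed by this conjugation even though the divisor $\sum_i m_i p_i$ moves with the pointed curve) is precisely the transition-function computation carried out in Polishchuk's proof, so nothing essential is missing.
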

\begin{proof}
 See the proof of \cite[Lemma 2.2]{polishchuk}. We note that for a pole $p_i$ of $s_0$ of order $|m_i|$ (and $s$ regular around $p_i$), we have $\text{ord}_{p_i} \partial'(\frac{s}{s_0}) \geq  |m_i|$ as the symbol of $\partial'$ vanishes at $p_i$. Thus the expression $s_0 \partial'(\frac{s}{s_0})$ is again regular around $p_i$. 
 \todoOld{We need that the induced isomorphism in $H^1$ is the tangent map of the morphism $\mathcal{J}^0 \to \mathcal{J}^{k(2g-2)}, (C,L) \mapsto (C,L(\sum_i m_i p_i))$. Note: then isomorphism is basically clear.}
\end{proof}

\begin{Lem} \label{Lem:Tpsik}
 For $L$ a line bundle on $C$ of degree $d$, the tangent map 
 \[d \psi_k : H^1(C,A_{L,p}) \to H^1(C,A_{L^{\otimes k},p})\]
 of $\psi_k : \mathcal{J}^d \to \mathcal{J}^{dk}$ at $(C,p,L)$ is induced by the sheaf map $\Psi: A_{L,p} \to A_{L^{\otimes k},p}$ sending an operator $\partial: L \to L$ to 
 \begin{align*}
  \partial \otimes \text{id}^{\otimes k-1} + \text{id} \otimes \partial \otimes \text{id}^{\otimes k-2} + \ldots + \text{id}^{\otimes k-1} \otimes \partial : L^{\otimes k} \to L^{\otimes k}.
 \end{align*}
\end{Lem}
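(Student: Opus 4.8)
The plan is to compute $d\psi_k$ directly on \v{C}ech cocycles, building on the deformation-theoretic description of the relative Jacobian that underlies the identification $T_{(C,p,L)}\mathcal J^d = H^1(C, A_{L,p})$ recalled above. Following \cite{polishchuk}, fix a finite affine open cover $\{U_\alpha\}$ of $C$ together with trivializations of $L$ over each $U_\alpha$. A tangent vector to $\mathcal J^d$ at $(C,p,L)$ is a first-order deformation $(C_\epsilon, p_\epsilon, L_\epsilon)$ over $D = \Spec\, \C[\epsilon]/(\epsilon^2)$, and such a deformation may be presented by gluing the trivial pieces $L|_{U_\alpha}\otimes_\C \C[\epsilon]/(\epsilon^2)$ along transition maps
$$\varphi_{\alpha\beta} \;=\; \mathrm{id} + \epsilon\,\partial_{\alpha\beta}\;:\; L|_{U_{\alpha\beta}}\otimes_\C \C[\epsilon]/(\epsilon^2) \longrightarrow L|_{U_{\alpha\beta}}\otimes_\C \C[\epsilon]/(\epsilon^2),$$
where $\partial_{\alpha\beta}\in A_{L,p}(U_{\alpha\beta})$ is a first-order differential operator (taken relative to the fixed trivialization of $L$, so that the usual transition functions of $L$ are absorbed). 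The symbols of the $\partial_{\alpha\beta}$ form the \v{C}ech cocycle of the Kodaira--Spencer class of the deformed pointed curve $(C_\epsilon,p_\epsilon)$, while the order-zero parts record the deformation of $L$ in the Jacobian direction; the relation $\varphi_{\alpha\beta}\varphi_{\beta\gamma}=\varphi_{\alpha\gamma}$ mod $\epsilon^2$ says that $(\partial_{\alpha\beta})$ is a \v{C}ech $1$-cocycle valued in $A_{L,p}$, and this is exactly the isomorphism $T_{(C,p,L)}\mathcal J^d \cong H^1(C,A_{L,p})$.

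\textbf{Step 2: applying $\psi_k$.} By definition $\psi_k$ sends $(C_\epsilon,p_\epsilon,L_\epsilon)$ to $(C_\epsilon,p_\epsilon, L_\epsilon^{\otimes k})$, the tensor power being over $\mathcal O_{C_\epsilon}$. Over each $U_\alpha$ this is $(L|_{U_\alpha})^{\otimes k}\otimes_\C\C[\epsilon]/(\epsilon^2)$, and its transition maps are the $k$-fold tensor powers $\varphi_{\alpha\beta}^{\otimes k}$. Expanding and using $\epsilon^2=0$,
$$\varphi_{\alpha\beta}^{\otimes k} = (\mathrm{id}+\epsilon\,\partial_{\alpha\beta})^{\otimes k} = \mathrm{id}^{\otimes k} + \epsilon\sum_{j=0}^{k-1}\mathrm{id}^{\otimes j}\otimes \partial_{\alpha\beta}\otimes \mathrm{id}^{\otimes(k-1-j)} = \mathrm{id} + \epsilon\,\Psi(\partial_{\alpha\beta}).$$
Hence the image tangent vector is represented by the cocycle $(\Psi(\partial_{\alpha\beta}))$, i.e. $d\psi_k = H^1(C,\Psi)$, as asserted.

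\textbf{Step 3: $\Psi$ is a well-defined sheaf map $A_{L,p}\to A_{L^{\otimes k},p}$.} This is formal: $\Psi$ is $\C$-linear and compatible with restriction to opens; if $\partial$ has order $\leq 1$ then by the Leibniz rule each summand of $\Psi(\partial)$, hence $\Psi(\partial)$, has order $\leq 1$; and the symbol of $\Psi(\partial)$ equals that of $\partial$, so if the latter vanishes along $E=p_1+\dots+p_n$ the same holds for $\Psi(\partial)$ and $\Psi(\partial)\in A_{L^{\otimes k},p}$. Two consistency checks are worth noting: on the subsheaf $\mathcal O_C\subset A_{L,p}$ of order-zero operators $\Psi$ is multiplication by $k$, matching the fact that $\psi_k$ restricts to the multiplication-by-$k$ isogeny on each Jacobian fibre; and the preservation of symbols reflects that $\psi_k$ is a morphism over $\mathcal M_{g,n}$ and hence induces the identity on Kodaira--Spencer classes.

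\textbf{Expected main obstacle.} The one point that requires care is the meaning of $\varphi_{\alpha\beta}^{\otimes k}$, equivalently of $(\mathrm{id}+\epsilon\,\partial_{\alpha\beta})^{\otimes k}$, in Step 2: since $\partial_{\alpha\beta}$ is a genuine differential operator and not $\mathcal O_C$-linear, a tensor power over $\mathcal O_C$ is not literally defined. The resolution is that over the deformed structure sheaf $\mathcal O_{C_\epsilon}$ the map $\varphi_{\alpha\beta}$ is an honest isomorphism of locally free $\mathcal O_{C_\epsilon}$-modules (the source and target carrying the $\mathcal O_{C_\epsilon}$-module structures built into the gluing of $C_\epsilon$), so its $k$-fold tensor power over $\mathcal O_{C_\epsilon}$ is well-defined and, by functoriality of $\otimes$, yields the gluing data of $L_\epsilon^{\otimes k}$; unwinding this in the chosen local frames and reducing modulo $\epsilon^2$ produces exactly $\mathrm{id}+\epsilon\,\Psi(\partial_{\alpha\beta})$. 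Concretely, one can verify the whole computation in an affine local model — $U_{\alpha\beta}=\Spec R$, $L$ free with frame $e$, $\partial_{\alpha\beta}(fe)=(\theta(f)+hf)\,e$ for a derivation $\theta$ of $R$ and some $h\in R$ — where it is elementary. This deformation-theoretic bookkeeping is the real content of the lemma; everything in Step 3 is routine.
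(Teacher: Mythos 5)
Your proposal is correct and takes essentially the same approach as the paper: both identify tangent vectors to $\mathcal{J}^d$ with \v{C}ech $1$-cocycles in $A_{L,p}$ arising from the gluing data of a first-order deformation of $(C,p,L)$ and then compute the effect of $\psi_k$ on this data. The paper packages the cocycle as $\partial_{\alpha\beta}=d_{\alpha\beta}+g_{\alpha\beta}/f_{\alpha\beta}$ (vector field plus logarithmic change of the transition functions) and checks that $\Psi(\partial_{\alpha\beta})$ reproduces the transition functions $F_{\alpha\beta}^k$ of $\mathcal{L}^{\otimes k}$, whereas you take the $k$-fold tensor power of the gluing maps $\mathrm{id}+\epsilon\,\partial_{\alpha\beta}$ directly; this is the same calculation in slightly different form, and your discussion of why the tensor power of these non-$\mathcal{O}_C$-linear maps is well defined handles the only delicate point.
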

\begin{proof}
Assume we are given a first order deformation
\[\begin{tikzcd}
   C\arrow{d} \arrow{r} &\mathcal{C}\arrow{d}\\
   \text{Spec}(\mathbb{C}) \arrow{r} & \text{Spec}(\mathbb{C}[\epsilon]/(\epsilon^2))
  \end{tikzcd}
\]
of $(C,p)$ and a line bundle $\mathcal{L}$ on $\mathcal{C}$ deforming $L$. Then for an affine cover $\mathcal{U} = (U_\alpha)_{\alpha \in A}$ of $C$ trivializing $\mathcal{L}$ and $U_{\alpha \beta} = U_\alpha \cap U_\beta$ we have a cycle $(d_{\alpha \beta} \in \mathcal{Z}^1(U_{\alpha \beta},\mathcal{T}_C(-E)))$ describing the deformation of $(C,p)$. If $f_{\alpha \beta}$ are the transition functions of $L$, the transition functions of $\mathcal{L}$ have the form $F_{\alpha \beta} = f_{\alpha \beta} + \epsilon g_{\alpha \beta}$. Then under the identification of first-order deformations of $(C,p,L)$ with $H^1(C,A_{L,p})$ this data corresponds to the $1$-cycle
\[\left(\partial_{\alpha \beta} = d_{\alpha \beta} + \frac{g_{\alpha \beta}}{f_{\alpha \beta}} \in A_{L,p}(U_{\alpha \beta}) \right). \]
Under the map $\psi_k$, the data $d_{\alpha \beta}$ of the deformation of $(C,p)$ remains unchanged, but the transition functions of $\mathcal{L}^{\otimes k}$ are now
\[F_{\alpha \beta}^k = f_{\alpha \beta}^k + k \epsilon g_{\alpha \beta} f_{\alpha \beta}^{k-1}.\]
On the other hand, the sheaf map $\Psi$ above gives us a $1$-cycle $(\Psi(\partial_{\alpha \beta}) \in A_{L^{\otimes k},p}(U_{\alpha \beta}))$ and for a section $s \otimes 1 \otimes \ldots \otimes 1$ of $L^{\otimes k}$ on $U_{\alpha \beta}$ we have
\begin{align*}
 &\Psi(\partial_{\alpha \beta}) s \otimes 1 \otimes \ldots \otimes 1\\
 =& (d_{\alpha \beta} s + \frac{g_{\alpha \beta}}{f_{\alpha \beta}} s) \otimes 1 \otimes \ldots \otimes 1 + s \otimes (\underbrace{d_{\alpha \beta} 1}_{=0} +\frac{g_{\alpha \beta}}{f_{\alpha \beta}} 1) \otimes 1 \otimes \ldots \otimes 1 + \ldots \\
 =&(d_{\alpha \beta} s + s \frac{k g_{\alpha \beta}}{f_{\alpha \beta}}) \otimes 1 \otimes \ldots \otimes 1.
\end{align*}
Indeed, this data corresponds to the transition functions
\[f_{\alpha \beta}^k + \epsilon f_{\alpha \beta}^k \frac{k g_{\alpha \beta}}{f_{\alpha \beta}} = F_{\alpha \beta}^k\]
as claimed.
\end{proof}

\begin{Cor} \label{Cor:Tpsik}
 The tangent map
 \[d(\psi_k \circ c) : H^1(C,\mathcal{T}_C(-E)) \to H^1(C,A_{\omega_C^{\otimes k},p})\]
 to the morphism $\psi_k \circ c : \mathcal{M}_{g,n} \to \mathcal{J}^{k(2g-2)}$ at $(C,p)$ is induced by the map 
 \[\mathcal{T}_C(- E) \to A_{\omega_C^{\otimes k},p},\quad v \mapsto L_v,\]
 where $L_v$ is the Lie-derivative along the tangent field $v$.
\end{Cor}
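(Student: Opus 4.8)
The plan is to reduce, via Lemma \ref{Lem:Tpsik} and the chain rule, to the analogous statement for the morphism $c$ alone, and then to compute $dc$ by the same \v{C}ech-theoretic argument used in the proof of Lemma \ref{Lem:Tpsik}, exploiting that the line bundle attached by $c$, namely $\omega_C$, is the restriction of the functorially defined relative dualizing sheaf and so deforms canonically with the curve. As preparation I would record the local shape of the Lie derivative: if $v$ is a vector field on an open set of $C$, written as $v=a(z)\,\partial_z$ in a local coordinate $z$, then on $\omega_C^{\otimes k}$ one has $L_v\big(f\,(dz)^{\otimes k}\big)=\big(a f'+k\,a' f\big)(dz)^{\otimes k}$. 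In particular $L_v$ is a differential operator of order $\leq 1$ with symbol $v$, so for $v$ a section of $\mathcal{T}_C(-E)$ its symbol vanishes at $p_1, \ldots, p_n$ and $L_v\in A_{\omega_C^{\otimes k},p}$; this defines the sheaf map $v\mapsto L_v$ in the statement, and the case $k=1$ gives the corresponding map into $A_{\omega_C,p}$. Since the Lie derivative is a derivation with respect to tensor products, applying the operator $\Psi$ from Lemma \ref{Lem:Tpsik} to $L_v$ on $\omega_C$ yields exactly $L_v$ on $\omega_C^{\otimes k}$. Consequently $d(\psi_k\circ c)=d\psi_k\circ dc$ is induced by $v\mapsto\Psi(L_v)=L_v$ once we know that $dc$ is induced by $v\mapsto L_v$ on $\omega_C$.

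For this last point I would argue exactly as in the proof of Lemma \ref{Lem:Tpsik}. Let $(d_{\alpha\beta})\in\mathcal{Z}^1(\mathcal{U},\mathcal{T}_C(-E))$ describe a first-order deformation $\mathcal{C}\to\Spec(\C[\epsilon]/(\epsilon^2))$ of $(C,p)$, with deformed coordinate transitions of the form $z_\alpha=\tau_{\alpha\beta}(z_\beta)+\epsilon\,a_{\alpha\beta}(z_\beta)$ on the overlaps $U_{\alpha\beta}$. The line bundle produced by $c$ on this family is the relative dualizing sheaf $\omega_{\mathcal{C}/\Spec(\C[\epsilon]/(\epsilon^2))}$, which I trivialize on $U_\alpha$ by $(dz_\alpha)$. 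Differentiating the transition $z_\alpha=\tau_{\alpha\beta}(z_\beta)+\epsilon\,a_{\alpha\beta}(z_\beta)$ gives the transition functions $\tau'_{\alpha\beta}+\epsilon\,a'_{\alpha\beta}$ of this sheaf (up to a reciprocal, depending on the sign conventions of Lemma \ref{Lem:Tpsik}), so in the notation of that lemma $g_{\alpha\beta}/f_{\alpha\beta}=\pm a'_{\alpha\beta}/\tau'_{\alpha\beta}$. Plugging into the lemma, the $1$-cocycle in $A_{\omega_C,p}$ representing $dc$ of the deformation is $\big(d_{\alpha\beta}\pm a'_{\alpha\beta}/\tau'_{\alpha\beta}\big)$, which by the local formula above is precisely $\big(L_{d_{\alpha\beta}}\big)$, the signs conspiring to give the honest Lie derivative. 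As this identification is natural in the deformation, $dc$ is induced by the sheaf map $v\mapsto L_v$, and together with the first paragraph this proves the corollary. (Alternatively one may bypass Lemma \ref{Lem:Tpsik} and run the same computation directly with $\omega_{\mathcal{C}/S}^{\otimes k}$, whose transition functions are $(\tau'_{\alpha\beta}+\epsilon\,a'_{\alpha\beta})^{\pm k}$, arriving immediately at $L_{d_{\alpha\beta}}$ on $\omega_C^{\otimes k}$.)

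I expect the only real obstacle to be bookkeeping: pinning down the local conventions of Lemma \ref{Lem:Tpsik} — in which chart's coordinate $d_{\alpha\beta}$ is expressed, whether a transition function or its reciprocal appears, and keeping the twist by $-E$ in sight so that all symbols vanish at the $p_i$ — so that the coordinate computation dovetails with that lemma. Conceptually nothing is hidden: the relative dualizing sheaf is canonically attached to the family, so $c$ is a tautological section of $\mathcal{J}^{2g-2}\to\mathcal{M}_{g,n}$ whose differential is forced, and the answer is the Lie derivative because that is precisely how infinitesimal deformations of $C$ act on powers of $\omega_C$.
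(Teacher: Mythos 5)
Your argument is correct and follows essentially the same route as the paper: decompose $d(\psi_k\circ c)=d\psi_k\circ dc$, apply Lemma \ref{Lem:Tpsik} to $d\psi_k$, and use the derivation property $L_v(S\otimes T)=(L_vS)\otimes T+S\otimes(L_vT)$ to identify $\Psi(L_v)$ with $L_v$ on $\omega_C^{\otimes k}$. The only difference is that for the base ingredient ``$dc$ is induced by $v\mapsto L_v$ on $\omega_C$'' the paper simply cites \cite[Lemma 2.3]{polishchuk}, whereas you sketch the underlying \v{C}ech computation yourself; your sketch is the standard proof of that cited lemma, and the convention-dependent bookkeeping you flag does work out once $d_{\alpha\beta}$ and the trivializations are matched consistently.
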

\begin{proof}
 The case $k=1$ (and thus $\psi_k = \text{id}$) was shown in \cite[Lemma 2.3]{polishchuk}. For general $k$ we know $d(\psi_k \circ c) = d \psi_k \circ d c$ and $d c$ is induced by $v \mapsto (L_v: \omega_C \to \omega_C)$. Thus by Lemma \ref{Lem:Tpsik}, $d(\psi_k \circ c)$ is induced by
 \[v \mapsto L_v \otimes \text{id}^{\otimes k-1} +  \ldots + \text{id}^{\otimes k-1} \otimes L_v = L_v : \omega^{\otimes k} \to \omega^{\otimes k }. \]
 Here we use that $L_v (S \otimes T) = (L_v S) \otimes T + S \otimes (L_v T)$ for tensor fields $S,T$.
\end{proof}
\begin{Lem}
 The cokernel of the map (\ref{eqn:tangspace}) is isomorphic to the cokernel of the map 
 \[H^1(C,\mathcal{T}_C(-E)) \xrightarrow{H^1(\varphi)} H^1(C,\mathcal{O}_C)\]
 induced by the sheaf map 
 \[\varphi: \mathcal{T}_C(-E) \to \mathcal{O}_C,\quad v \mapsto \frac{1}{s_0} L_v(s_0).\] 
\end{Lem}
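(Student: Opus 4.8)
The plan is to realize the map (\ref{eqn:tangspace}) as the map on $H^1$ induced by an explicit morphism of sheaves on $C$, and then to exploit that the Lie derivative splits a natural short exact sequence so that the cokernel collapses onto the desired one.

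\textbf{Step 1: the inducing sheaf map.} The morphism $\sigma_k^\mu$ is the composite of the zero section $\mathcal{M}_{g,n}\to\mathcal{J}^0$, $(C,p)\mapsto (C,p,\mathcal{O}_C)$, with the morphism $(C,p,L)\mapsto (C,p,L(\sum_i m_i p_i))$ of Lemma \ref{Lem:Aisom}. The tangent map of the zero section at $(C,p)$ is $H^1$ of the inclusion $\mathcal{T}_C(-E)\hookrightarrow A_{\mathcal{O}_C,p}$ of vector fields as derivations: in the Čech description in the proof of Lemma \ref{Lem:Tpsik}, extending $\mathcal{O}_C$ to $\mathcal{O}_\mathcal{C}$ over a deformation $\mathcal{C}$ (as the zero section does) gives trivial transition functions, i.e. $g_{\alpha\beta}=0$ and hence $\partial_{\alpha\beta}=d_{\alpha\beta}$. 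Composing with Lemma \ref{Lem:Aisom}, we get $d\sigma_k^\mu = H^1(\alpha)$ for the sheaf map $\alpha:\mathcal{T}_C(-E)\to A_{\omega_C^{\otimes k},p}$ sending $v$ to the operator $\partial_v(s)=s_0\,v(s/s_0)$. Writing a local section as $s = h\,s_0$ with $h$ a meromorphic function, one has $\partial_v(s) = s_0\,v(h) = L_v(s) - h\,L_v(s_0) = L_v(s) - \varphi(v)\cdot s$, that is
\[\alpha = L_\bullet - \iota\circ\varphi,\]
where $\iota:\mathcal{O}_C\to A_{\omega_C^{\otimes k},p}$ is the inclusion of order-zero operators and $\varphi$ is the map in the statement; in particular $\varphi$ does take values in $\mathcal{O}_C$, since $\alpha$ and $L_\bullet$ land in $A_{\omega_C^{\otimes k},p}$ and $\iota$ is injective. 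By Corollary \ref{Cor:Tpsik}, $d(\psi_k\circ c) = H^1(L_\bullet)$, so (\ref{eqn:tangspace}) is $H^1$ of
\[\Phi:\mathcal{T}_C(-E)^{\oplus 2}\to A_{\omega_C^{\otimes k},p},\qquad (v,w)\mapsto \alpha(v) - L_w = L_{v-w} - \iota(\varphi(v)).\]

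\textbf{Step 2: splitting the symbol sequence.} There is a short exact sequence of sheaves
\[0\to\mathcal{O}_C\xrightarrow{\iota} A_{\omega_C^{\otimes k},p}\xrightarrow{\textnormal{symb}}\mathcal{T}_C(-E)\to 0,\]
and since the symbol of $L_v$ is $v$, the ($\mathbb{C}$-linear) map $L_\bullet:\mathcal{T}_C(-E)\to A_{\omega_C^{\otimes k},p}$ is a section of $\textnormal{symb}$. Hence this sequence splits as a sequence of sheaves of $\mathbb{C}$-vector spaces, so that
\[H^1(C,A_{\omega_C^{\otimes k},p}) = H^1(\iota)\big(H^1(C,\mathcal{O}_C)\big)\ \oplus\ H^1(L_\bullet)\big(H^1(C,\mathcal{T}_C(-E))\big),\]
with $H^1(\iota)$ injective (the connecting map $H^0(\mathcal{T}_C(-E))\to H^1(\mathcal{O}_C)$ vanishes by the splitting).

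\textbf{Step 3: reading off the cokernel.} Apply the automorphism $(v,w)\mapsto(v,v-w)$ of the source of $\Phi$; it does not change the cokernel and rewrites $\Phi$ as $(v,u)\mapsto L_u - \iota(\varphi(v))$. On $H^1$, in the decomposition of Step 2, this map sends $(v,u)$ to $\big({-}H^1(\iota)H^1(\varphi)(v),\ H^1(L_\bullet)(u)\big)$. As $u$ varies, the second component sweeps out the entire second summand, while the first component sweeps out $H^1(\iota)\big(\im H^1(\varphi)\big)$ inside the first summand. Therefore the cokernel of (\ref{eqn:tangspace}) is
\[H^1(\iota)\big(H^1(C,\mathcal{O}_C)\big)\ \big/\ H^1(\iota)\big(\im H^1(\varphi)\big)\ \cong\ H^1(C,\mathcal{O}_C)/\im H^1(\varphi),\]
i.e. the cokernel of $H^1(\varphi)$, as claimed. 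I expect the only genuinely delicate point to be the bookkeeping of Step 1 — pinning down the normalization of $d\sigma_k^\mu$ coming from the zero section and verifying the identity $\alpha = L_\bullet - \iota\circ\varphi$; once the sheaf-level picture is in place, Steps 2 and 3 are the short formal argument above.
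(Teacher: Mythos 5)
Your argument is correct, and it is essentially the proof the paper invokes by reference: like the cited Proposition 2.4 of Polishchuk (adapted via Lemma \ref{Lem:Aisom}, Lemma \ref{Lem:Tpsik} and Corollary \ref{Cor:Tpsik}), it rests on writing $d\sigma_k^\mu$ as $H^1$ of the operator $v \mapsto (s \mapsto s_0 v(s/s_0)) = L_v - \iota(\varphi(v))$ and $d(\psi_k\circ c)$ as $H^1(L_\bullet)$, and then using the symbol sequence for $A_{\omega_C^{\otimes k},p}$, split by $L_\bullet$, to reduce the cokernel to that of $H^1(\varphi)$. So you have simply filled in the details of the argument the paper outsources, with the same key identity and bookkeeping.
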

\begin{proof}
 This can be seen exactly as in the proof of \cite[Proposition 2.4]{polishchuk}, where we use Lemma \ref{Lem:Aisom}, Lemma \ref{Lem:Tpsik} and Corollary \ref{Cor:Tpsik} instead of the corresponding results in \cite{polishchuk}. 
\end{proof}
To prove Proposition \ref{Pro:dimTC} we must show that the cokernel of $H^1(\varphi)$ has dimension $1$ if (\ref{eqn:condhol}) is satisfied and dimension $0$ otherwise.

Our strategy for computing the cokernel above is to explicitly identify the kernel $\mathcal{K}$ and the cokernel $\mathcal{N}$ of the sheaf map $\varphi : \mathcal{T}_C(-E) \to \mathcal{O}_C$ (in the analytic category) and then to use long exact sequences in cohomology induced by the exact sequence 
\[0 \to \mathcal{K} \to \mathcal{T}_C(-E) \xrightarrow{\varphi} \mathcal{O}_C \to \mathcal{N} \to 0.\]
We will first treat the case $k \geq 1$.
\begin{Pro} \label{Pro:kercoker}
 Let $k \geq 1$. The subsheaf $\mathcal{K}$ of $\mathcal{T}_C(-E)$ associates to an open set $U \subset C$ the vector fields $v$ on $U$ vanishing at all $p_i$ such that the natural pairing $\langle v^{\otimes k}, s_0 \rangle$ induced from the contraction $\mathcal{T}_C^{\otimes k} \otimes (\mathcal{T}_C^{\vee})^{\otimes k} \to \mathbb{C}$ is locally constant. The sheaf $\mathcal{K}$ can be expressed as $\mathcal{K} = \iota_! \widetilde{\mathcal K}$, where 
 \[\iota : \tilde C= C \setminus \{p_i: m_i \geq 0\text{ or $k$ does not divide $m_i$}\} \hookrightarrow C\]
 is the inclusion of the complement some of the $p_i$. The sheaf $\widetilde{\mathcal K}$ is a local system of rank $1$ on $\tilde C$. 
 
 The cokernel $\mathcal{N}$ of $\varphi$ is isomorphic to $j_* \underline{\mathbb{C}}$, where 
 \[j : P=\{p_i: m_i<0\text{ and $k$ divides $m_i$}\} \hookrightarrow C.\]
\end{Pro}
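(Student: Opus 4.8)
The plan is to work locally on $C$ and analyze the sheaf map
\[\varphi : \mathcal{T}_C(-E) \to \mathcal{O}_C,\quad v \mapsto \frac{1}{s_0} L_v(s_0)\]
point by point. Away from the marked points $p_i$, the section $s_0$ is a nowhere-vanishing local section of $\omega_C^{\otimes k}$, so in a local coordinate $z$ we may write $s_0 = u(z)\,(dz)^{\otimes k}$ with $u$ a unit, and a vector field vanishing at the $p_i$ is locally $v = f(z)\,\partial_z$ (with $f$ forced to vanish at any nearby $p_i$). A direct computation of the Lie derivative gives
\[\frac{1}{s_0} L_v(s_0) = f \frac{u'}{u} + k f'.\]
From this formula everything follows: I would first read off that $\varphi$ is \emph{surjective} at every point of $C \setminus P$ — indeed for $k \geq 1$ the term $kf'$ alone already surjects onto $\mathcal{O}_{C,x}$ when there is no constraint on $f$, and near a point $p_i$ with $m_i \geq 0$ or $k \nmid m_i$ one checks that the valuation of $f u'/u + kf'$ can be made $0$ using that $v$ vanishes only to order $1$ — while near a point $p_i \in P$ (where $m_i < 0$ and $k \mid m_i$) the image consists exactly of functions vanishing at $p_i$, giving $\mathcal{N}_x \cong \mathbb{C}$ supported there; summing up, $\mathcal{N} \cong j_* \underline{\mathbb{C}}$.

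Next I would identify the kernel. A local section $v = f\partial_z$ lies in $\ker \varphi$ iff $f u'/u + kf' = 0$, i.e. iff $(f^k u)' = k f^{k-1}(fu'/u+kf')\cdot(\text{unit}) = 0$ — more cleanly, iff the scalar $f^k u$ obtained by contracting $v^{\otimes k}$ against $s_0$ is locally constant, which is precisely the description of $\mathcal{K}$ in the statement. Since $f^k u$ being a nonzero constant forces $f$ to be a unit times a $k$-th root of $1/u$, which is single-valued and nonvanishing, $\mathcal{K}$ restricted to any simply connected open subset of $C \setminus E$ is a rank-$1$ free $\underline{\mathbb{C}}$-module (spanned by such a $v$); so on $\tilde C$ it is a rank-$1$ local system $\widetilde{\mathcal K}$. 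The only points requiring care are the $p_i$: near $p_i$ write $s_0 = z^{m_i} \tilde u (dz)^{\otimes k}$; the equation $f^k z^{m_i} \tilde u = \text{const}$ has a solution $f$ that is a (single-valued, holomorphic, nonvanishing at $p_i$) section of $\mathcal{T}_C(-E)$ near $p_i$ exactly when $m_i < 0$ and $k \mid m_i$ — then $f \sim z^{-m_i/k}$ vanishes to the required order $\geq 1$ — and has no such solution otherwise, which is why $\tilde C$ is obtained by removing exactly the $p_i$ with $m_i \geq 0$ or $k \nmid m_i$. Writing $\iota$ for the open inclusion $\tilde C \hookrightarrow C$, the sheaf $\mathcal{K}$ is the extension by zero $\iota_! \widetilde{\mathcal K}$, since no nonzero section of $\mathcal{K}$ extends over the removed punctures.

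The \textbf{main obstacle} is the local analysis at the marked points: one must keep careful track of the two competing orders of vanishing — the order $\geq 1$ forced on $v$ by the twist $(-E)$, and the order $\pm m_i/k$ (when $k \mid m_i$) or the fractional/non-integral exponent (when $k \nmid m_i$) coming from the $k$-th root of $s_0$ — and verify in each of the three cases ($m_i \geq 0$; $m_i < 0$ with $k \mid m_i$; $k \nmid m_i$) both whether a kernel section survives and whether $\varphi$ is surjective. In particular the appearance of a \emph{multivalued} $k$-th root when $k \nmid m_i$ is what both kills the kernel there and is consistent with $p_i \notin P$, so it does not contribute to $\mathcal{N}$ either. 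Once these local model computations are in hand, the global statements $\mathcal{K} = \iota_! \widetilde{\mathcal K}$ and $\mathcal{N} \cong j_* \underline{\mathbb{C}}$ follow by patching, and exactness of
\[0 \to \mathcal{K} \to \mathcal{T}_C(-E) \xrightarrow{\varphi} \mathcal{O}_C \to \mathcal{N} \to 0\]
is then immediate from the pointwise exactness established along the way.
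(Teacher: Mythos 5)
Your kernel analysis is essentially the paper's: the identity $(f^k g)' = f^{k-1} g\,\varphi(v)$ (with $s_0 = g\,(dz)^{\otimes k}$), the rank-one statement on simply connected sets, and the case check at the $p_i$ (your parenthetical ``nonvanishing at $p_i$'' is a slip, since for $p_i \in P$ the generator vanishes to order $-m_i/k \geq 1$, as you then correctly say) all match, and that half of the argument is fine. The genuine gap is in the cokernel half, which is where the real work of the proposition lies and which you only assert. Since $\varphi$ is a differential operator and not $\mathcal{O}_C$-linear, its image in a stalk is merely a $\mathbb{C}$-subspace, so neither ``the term $kf'$ alone already surjects'' nor ``the valuation of $f u'/u + k f'$ can be made $0$'' proves surjectivity of the stalk map: you must show that for \emph{every} germ $h$ the singular ODE $f\,\frac{d\log g}{dz} + k f' = h$ has a holomorphic solution $f$ vanishing at $p_i$, and determine the exact codimension of the solvable $h$ when it does not. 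Moreover your one concrete claim here is false in general: at $p_i \in P$ the image is \emph{not} ``exactly the functions vanishing at $p_i$'' once $m_i \leq -2k$. For instance for $m_i = -2k$ the obstruction is the vanishing of the coefficient of $z$ in $g'' h$ (with $g''$ a $k$-th root of the unit part of $g$), so $h = z$ is not in the image even though it vanishes at $p_i$; only the conclusion $\dim \mathcal{N}_{p_i} = 1$ is correct.

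The missing ingredient is precisely the paper's device for solving this ODE: pass to the branched cover $z = u^k$ (equivalently, use the locally defined integrating factor given by a $k$-th root of $g$), which turns the equation into the exact form $\frac{d}{du}\bigl(f(u^k) g'(u)\bigr) = u^{k+b-1}(g'' h)(u^k)$ with $b = \mathrm{ord}_{p_i} g$. One then sees that a primitive exists iff a single residue vanishes, that this residue condition is vacuous unless $b < 0$ and $k \mid b$ (so $\varphi$ is stalk-surjective at all other points, including $m_i > 0$ and $k \nmid m_i$), that it is exactly one linear condition at $p_i \in P$, and — a point you do not address at all — that the solution so obtained descends to a single-valued holomorphic function of $z$ vanishing at $p_i$, hence really comes from a section of $\mathcal{T}_C(-E)$. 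Without this (or an equivalent recursive power-series argument with a convergence check), the statements $\mathcal{N} \cong j_* \underline{\mathbb{C}}$ and the exactness of the four-term sequence at $\mathcal{O}_C$ are not established.
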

\begin{proof}
 Fix an open set $U \subset C$ with local coordinate $z$ and write $s_0 = g (dz)^k$, where $g$ is a meromorphic function on $U$. Given a vector field $v=f \frac{d}{dz}$ on $U$, we have
 \begin{align*}
   \varphi(U)(v) &=  \frac{1}{s_0} L_v(s_0) = \frac{1}{g (dz)^k} L_v(g (dz)^k) \\
   &= \frac{f \frac{dg}{dz} + k g \frac{df}{dz}}{g}= f \frac{d\log(g)}{dz} + k \frac{df}{dz}.
 \end{align*}
 This is zero iff $\frac{d\log(g)}{dz}+ k \frac{d\log(f)}{dz} = 0$, i.e. $f^k g= \langle v^{\otimes k}, s_0 \rangle$ is constant. If this has a nonzero solution $f$ then all solutions are scalar multiples of $f$ (as $g$ only has isolated zeroes).
 
 Concerning the (local) existence of a solution around a point $p$, note that if $g$ has an isolated zero at $p$, this equation has no nonzero holomorphic solution $f$ and such zeroes $p$ of $g$ occur exactly on the $p_i$ with $m_i >0$. If $g$ is regular but nonzero at $p$, we can choose a local logarithm and thus a solution $f$ exists and $f(p) \neq 0$. However, if $p=p_i$ with $m_i=0$, the definition of $\mathcal{T}_C(-E)$ requires $f$ to have a zero at $p_i$, so again at these points $p_i$ there is no solution. Finally if $g$ has a pole at $p$, i.e. we have $p=p_i$ with $m_i<0$, a solution $f$ exists iff the order $m_i$ of the pole is divisible by $k$. This finishes the proof that the kernel $\mathcal{K}$ is the exceptional pushforward of a local system $\widetilde{\mathcal K}$ on the set $\tilde C$ above.
 
 To compute the cokernel, let $p \in U$ (which we identify with $0 \in \mathbb{C}$ below), let $h$ be a function on $U$, then we try to solve the differential equation
 \[\frac{f(z)}{g(z)} \frac{dg}{dz} + k \frac{df}{dz} = h(z)\]
 on $U$. For this we go to the branched covering $z=u^k$. Let $b=\text{ord}_p g$ and write $g(z) = z^b \tilde g(z)$. Let $g''$ be a local $k$-th rooth of $\tilde g$, then for $g'(u)=u^b g''(u^k)$ we have $g'(u)^k = g(u^k)$. With these preparations, our differential equation in coordinate $u$ has the form
 \[\frac{f(u^k)}{g'(u)^k} \frac{dg'}{du} g'(u)^{k-1} \frac{1}{u^{k-1}}  + \frac{df(u^k)}{du} \frac{1}{u^{k-1}} = h(u^k).\]
 Multiplying by $u^{k-1} g'(u)=u^{k+b-1} g''(u^k)$ we obtain
 \[\frac{d(f(u^k) g'(u))}{du} = u^{k+b-1} (g'' \cdot h) (u^k).\]
 The right hand side has a primitive $G$ iff its residue at $0$ vanishes. If this is the case, choose $G$ with constant term $0$ in its Laurent series around $u=0$, then the function $G(u)/g'(u)$ has only terms $u^{kj}, j \in \mathbb{Z}_{\geq 1}$ appearing in its power series, so we find a solution $f(z)$ vanishing at $0$. 
 
 The condition on the residue is automatic if $b$ is not negative and divisible by $k$ (that is $p \notin P$) and otherwise it is an obstruction with values in $\mathbb{C}$, so the cokernel of $\varphi$ has the form claimed above.
\end{proof}
In the case $k=0$, with notation as in the proof above, the function $\varphi(v)$ is locally given by $f \frac{d\log(g)}{dz}$. Thus if the partition $\mu$ is not the trivial partition $\mu=(0, \ldots, 0)$, the meromorphic function $g$ is nonconstant and hence $f \frac{d\log(g)}{dz}=0$ iff $f=0$. This shows that the kernel $\mathcal{K}$ is trivial. On the other hand, for a function $h$ the equation
\[f \frac{d\log(g)}{dz} = h\]
has the unique solution $f=h/\frac{d\log(g)}{dz}$ away from the zeroes of the derivative of $g$, so the cokernel $\mathcal{N}$ is again supported on a finite set in $C$.

We return now to the general case of $k \geq 0$. Let $\mathcal{I}=\text{im}(\varphi) \hookrightarrow \mathcal{O}_C$, then we have two short exact sequences of sheaves on $C$:
\begin{align*}
 0 \to \mathcal{K} \to \mathcal{T}_C(-E) \xrightarrow{\varphi} \mathcal{I} \to 0,\\
 0 \to \mathcal{I} \to \mathcal{O}_C \to \mathcal{N} \to 0.
\end{align*}
These induce long exact sequences in cohomology groups on $C$, given by
\begin{align*}
  \cdots \to H^1(\mathcal{K}) \to H^1(\mathcal{T}_C(-E)) \to H^1(\mathcal{I}) \to H^2(\mathcal{K}) \to 0,\\
  \cdots \to H^0(\mathcal{N}) \to H^1(\mathcal{I}) \to H^1(\mathcal{O}_C) \to 0.
\end{align*}
Here we use that the higher cohomologies of sheaves supported on isolated points vanish. Thus we have the following diagram
\begin{equation} \label{eqn:exseq}
\begin{tikzcd}
 & & & 0\\
 H^0(\mathcal{N}) \arrow{rd} & & H^2(\mathcal{K}) \arrow{ru} & \\
 & H^1(\mathcal{I}) \arrow{ru} \arrow{rd} & &\\
 H^1(\mathcal{T}_C(-E)) \arrow{rr}{H^1(\varphi)} \arrow{ru} & &H^1(\mathcal{O}_C) \arrow{rd} &\\
 & & & 0
\end{tikzcd}
\end{equation}
where the diagonal sequences are exact. From this we see immediately that $H^2(\mathcal{K})=0$ implies that $H^1(\varphi)$ is surjective, hence has trivial cokernel. This already finishes the proof of the case $k=0$. Fortunately, for $k \geq 1$ we can compute $H^2(\mathcal{K})$ easily.
\begin{Lem} \label{Lem:dimH2K}
 For $k \geq 1$ we have
 \[\dim\ H^2(\mathcal{K}) = \begin{cases}
                             1& \text{ if there is a merom. section $\tilde s_0$ of $\omega_C$ with $\tilde s_0^{\otimes k} = s_0$,}\\
                             0& \text{ otherwise}.
                            \end{cases}
\]
\end{Lem}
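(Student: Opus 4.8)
The plan is to read off $\mathcal{K}=\iota_!\widetilde{\mathcal{K}}$ from Proposition~\ref{Pro:kercoker}, compute $H^2(C,\mathcal{K})$ topologically in terms of the monodromy of the rank-one local system $\widetilde{\mathcal{K}}$ on $\tilde C$, and then translate triviality of that monodromy into the existence of a $k$-th root of $s_0$ in $\omega_C$.

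First I would use that $C$ is a smooth connected projective curve and $\tilde C\subset C$ is the complement of a finite set, so that extension by zero identifies $H^2(C,\mathcal{K})=H^2(C,\iota_!\widetilde{\mathcal{K}})$ with the compactly supported cohomology $H^2_c(\tilde C,\widetilde{\mathcal{K}})$ of the open Riemann surface $\tilde C$. By Poincar\'e--Lefschetz duality with local coefficients on the oriented real surface $\tilde C$ (equivalently, the identification $H^2_c(\tilde C,\widetilde{\mathcal{K}})\cong H_0(\tilde C,\widetilde{\mathcal{K}})$), this group is the space of monodromy coinvariants of $\widetilde{\mathcal{K}}$. For a rank-one $\mathbb{C}$-local system on the connected space $\tilde C$ this is one-dimensional if the monodromy character is trivial and zero otherwise. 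So the lemma reduces to the assertion that $\widetilde{\mathcal{K}}$ is a trivial local system if and only if $s_0$ admits a $k$-th root $\tilde s_0$ in $\omega_C$.

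To prove this equivalence I would argue as follows. Since $\widetilde{\mathcal{K}}$ has rank one on the connected $\tilde C$, it is trivial exactly when it admits a nonzero global section over $\tilde C$; by the description in Proposition~\ref{Pro:kercoker} and its proof such a section is a holomorphic vector field $v$ on $\tilde C$ vanishing at the points of $P$, with $\langle v^{\otimes k},s_0\rangle$ constant, and one checks that a nonzero such $v$ necessarily has this constant nonzero. Given such a $v$, rescale so that $\langle v^{\otimes k},s_0\rangle\equiv1$ and let $\tilde s_0$ be the meromorphic section of $\omega_C$ dual to $v$ under the pairing $\mathcal{T}_C\otimes\omega_C\to\mathcal{O}_C$; then $\tilde s_0^{\otimes k}=s_0$ on $\tilde C$. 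Since $\tilde s_0$ is single-valued on a punctured disc around each $p_i\in C\setminus\tilde C$ while its $k$-th power $s_0$ has finite order $m_i$ there, single-valuedness forces $k\mid m_i$ and then $\tilde s_0$ extends meromorphically across $p_i$, giving a global meromorphic section of $\omega_C$ with $\tilde s_0^{\otimes k}=s_0$. Conversely, from such a $\tilde s_0$ one gets $\text{div}(\tilde s_0)=\tfrac1k\sum_i m_i p_i$, so $k\mid m_i$ for all $i$; the reciprocal section $v=\tilde s_0^{-1}$ of $\mathcal{T}_C$ is holomorphic on $\tilde C$, vanishes there exactly along $P$ (hence lies in $\mathcal{T}_C(-E)$), and satisfies $\langle v^{\otimes k},s_0\rangle=\langle v,\tilde s_0\rangle^k\equiv1$; thus $v$ is a nonzero global section of $\widetilde{\mathcal{K}}$ and $\widetilde{\mathcal{K}}$ is trivial.

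I expect the main obstacle to be this second equivalence rather than the topological reduction. The delicate point is that a flat section of $\widetilde{\mathcal{K}}$ over $\tilde C$ a priori only produces a $k$-th root of $s_0$ away from finitely many points, and one has to verify that it automatically extends over those points -- which simultaneously forces the corresponding $m_i$ to be divisible by $k$; dually, one has to check that $\tilde s_0^{-1}$ genuinely defines a section of the subsheaf $\mathcal{K}\subset\mathcal{T}_C(-E)$, i.e.\ has exactly the vanishing encoded by $P$. Once Poincar\'e duality with local coefficients is invoked, the first step is routine.
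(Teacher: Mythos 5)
Your proposal is correct and takes essentially the same route as the paper: identify $H^2(C,\mathcal{K})\cong H^2_c(\tilde C,\widetilde{\mathcal K})$, apply duality for the rank-one local system on $\tilde C$, and translate the resulting triviality condition into the existence of a meromorphic $k$-th root of $s_0$ in $\omega_C$. The only cosmetic difference is that you phrase the duality as monodromy coinvariants ($H^2_c\cong H_0$), whereas the paper uses $H^2_c(\tilde C,\widetilde{\mathcal K})\cong H^0(\tilde C,\widetilde{\mathcal K}^\vee)^\vee$ and reads off $k$-th roots from flat sections of the dual local system; your careful extension of $\tilde s_0$ across the punctures spells out a point the paper leaves implicit.
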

\begin{proof}
For the composition 
\[\tilde C \xrightarrow{\iota} C \xrightarrow{p} \{pt\},\]
the Leray spectral sequence gives us
\[R^ip_! R^j \iota_! \widetilde{\mathcal K} \implies R^{i+j}(p \circ \iota)_! \widetilde{\mathcal K}.\]
But note that the fibres of $\iota$ are either empty or single points and thus $R^j \iota_! \widetilde{\mathcal K} = 0$ for $j>0$. Recall now that for $f:X \to Y$ a continuous map of locally compact spaces and $\mathcal{F}$ a sheaf on $X$ we have $(R^i f_! \mathcal{F})_y = H_c^i(X_y,\mathcal{F}_y)$. Thus we can conclude
\[H^i(C, \iota_! \widetilde{\mathcal K}) = H^i_c(C, \iota_! \widetilde{\mathcal K}) = H^i_c(\tilde C, \widetilde{\mathcal K}).\]
Now for $i=2$ by \cite[Corollary 3.3.12]{MR2050072} we have an isomorphism
\[H^2_c(\tilde C, \widetilde{\mathcal K}) \cong H^0(\tilde C, \widetilde{\mathcal K}^\vee)^\vee.\]
Recall that $\widetilde{\mathcal K}$ parametrized tangent fields $v$ of $C$ with $\langle v^{\otimes k},s_0 \rangle = \text{const}$. From this we see that the dual local system $\widetilde{\mathcal K}^\vee$ parametrizes (meromorphic) sections $\tilde s_0$ of $\omega_C$ such that $\tilde s_0^{\otimes k}$ is a locally constant multiple of $s_0$. Such a section exists on all of $\tilde C$ iff we are in the first case of the Lemma above and then it is unique up to a constant, so indeed $H^2(C,\mathcal{K})$ is one-dimensional.
\end{proof}
The Lemma above is already sufficient to prove parts of Proposition \ref{Pro:dimTC}. Clearly condition (\ref{eqn:condhol}) implies on the one hand that $\dim\ H^2(\mathcal{K}) = 1$ and as there are no poles, we also have $\mathcal{N}=0$. Looking at the diagram (\ref{eqn:exseq}), we see $H^1(\mathcal{I}) \cong H^1(\mathcal{O}_C)$ and thus $\text{coker} H^1(\varphi) \cong H^2(\mathcal{K})$ is one-dimensional. This finishes the proof in this case.

In general, if there exists no meromorphic section $\tilde s_0$ of $\omega_C$ with $\tilde s_0^{\otimes k} = s_0$ our proof is also done, as in this case the cokernel of $H^1(\varphi)$ is trivial.

Thus we may from now on assume that we are in the strictly meromorphic case and that a section $\tilde s_0$ as above exists. Then from our description of $\widetilde{\mathcal K}^\vee$ it is obvious that it is the trivial local system $\underline{\mathbb{C}}$ on $\tilde C$ and thus the same is true for $\widetilde{\mathcal K}$. Moreover we must have that all weights $m_i$ in $\mu$ are divisible by $k$ (as $s_0$ has a $k$-th root), so there exists at least one marking $p_i$ with strictly negative weight $m_i$ divisible by $k$. 

With these assumptions, we claim that to finish the proof of Proposition \ref{Pro:dimTC}, it suffices to show the following Lemma.
\begin{Lem} \label{Lem:compononzero}
 The composition of morphisms
 \[H^0(\mathcal{N}) \xrightarrow{\delta} H^1(\mathcal{I}) \xrightarrow{\delta'} H^2(\mathcal{K})\]
 in the diagram (\ref{eqn:exseq}) above is nonzero and hence surjective.
\end{Lem}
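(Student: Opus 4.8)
We must show that the composition $\delta'\circ\delta$ of the statement is nonzero; since $\dim H^2(\mathcal K)=1$ in the case at hand (Lemma \ref{Lem:dimH2K}, first case), surjectivity then follows. As $\mathcal N=j_*\underline{\C}$ is a skyscraper supported on the nonempty set $P$, it suffices to exhibit one point $p\in P$ and one class $\bar h$ in the one‑dimensional stalk $\mathcal N_p=\mathcal O_{C,p}/\mathcal I_p$ with $\delta'\delta(\bar h)\neq 0$. The plan is to (i) represent $\delta'\delta(\bar h)$ by an explicit \v{C}ech $2$‑cocycle for $\mathcal K$ (this is the usual recipe for a composition of connecting homomorphisms, cf.\ diagram (\ref{eqn:exseq})), (ii) pair it against the generator $\tilde s_0$ of $H^0(\tilde C,\widetilde{\mathcal K}^\vee)$ under the isomorphisms $H^2(C,\mathcal K)=H^2_c(\tilde C,\widetilde{\mathcal K})\cong H^0(\tilde C,\widetilde{\mathcal K}^\vee)^\vee$ from the proof of Lemma \ref{Lem:dimH2K}, and (iii) evaluate the resulting $\C$‑valued class as a residue.

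For the set‑up, fix $p\in P$ with $m_p=-k\ell$, $\ell\geq 1$, and a coordinate $z$ on a disk $U_0$ around $p$ small enough that $U_0$ meets $C\setminus\tilde C$ not at all and $P$ only at $p$. Choose an open cover $\mathcal U=\{U_\alpha\}$ of $C$ in which $U_0$ is the only member containing $p$, the $U_\alpha$ with $\alpha\neq 0$ cover $C\setminus\{p\}$, and every double overlap $U_{0\beta}=U_0\cap U_\beta$ is simply connected; on such $U_{0\beta}$ the sheaf $\mathcal K$ is a rank‑one local system and $\mathcal I=\mathcal O_C$, so $\varphi$ is surjective on sections. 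Normalize $\tilde s_0$ so that $\tilde s_0^{\otimes k}=s_0$, write $\tilde s_0=\tilde h\,dz$ on $U_0$ (so $\operatorname{ord}_p\tilde h=-\ell$), and take $\bar h\in\mathcal N_p$ represented by $h_0:=1/(z\tilde h)\in\mathcal O(U_0)$, which is holomorphic at $p$ since $\operatorname{ord}_p h_0=\ell-1\geq 0$. Using $\varphi(v)=\frac{1}{s_0}L_v(s_0)=k\,\frac{L_v(\tilde s_0)}{\tilde s_0}=k\,\frac{d\langle v,\tilde s_0\rangle}{\tilde s_0}$ (product rule for $\tilde s_0^{\otimes k}$, then Cartan's formula $L_v\tilde s_0=d\langle v,\tilde s_0\rangle$, $\tilde s_0$ being closed), solving $\varphi(v)=h_0$ near $p$ amounts to finding a primitive of the meromorphic form $\eta:=\tfrac1k h_0\tilde s_0=\tfrac1k\tfrac{dz}{z}$; since $\eta$ has nonzero residue it has no single‑valued primitive near $p$, so $h_0\notin\mathcal I_p$ and $\bar h\neq 0$.

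For (i): lift $\bar h$ by $h_0$ over $U_0$ and by $0$ over the other $U_\alpha$, so $\delta(\bar h)$ is the $1$‑cocycle $(c_{\alpha\beta})$ with $c_{0\beta}=h_0$ and $c_{\alpha\beta}=0$ otherwise; lifting through $\varphi$, choose $v_{0\beta}\in\mathcal T_C(-E)(U_{0\beta})$ with $\varphi(v_{0\beta})=h_0$ and $v_{\alpha\beta}=0$ otherwise. Then $\delta'\delta(\bar h)$ is represented by the $2$‑cocycle $w$ with $w_{0\beta\gamma}=v_{0\beta}-v_{0\gamma}$ and $w_{\alpha\beta\gamma}=0$ otherwise, which indeed takes values in $\mathcal K=\widetilde{\mathcal K}$ over $U_{0\beta\gamma}\subset\tilde C$ as $\varphi(w_{0\beta\gamma})=0$. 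For (ii): since the duality pairing $\widetilde{\mathcal K}\otimes\widetilde{\mathcal K}^\vee\to\underline{\C}$ is the contraction $v\otimes\tilde s_0\mapsto\langle v,\tilde s_0\rangle$, pairing $w$ with $\tilde s_0$ gives the $\underline{\C}$‑valued $2$‑cocycle with components $\langle w_{0\beta\gamma},\tilde s_0\rangle=F_\beta-F_\gamma$, where $F_\beta:=\langle v_{0\beta},\tilde s_0\rangle\in\mathcal O(U_{0\beta})$ satisfies $dF_\beta=L_{v_{0\beta}}\tilde s_0=\eta$; thus the $F_\beta$ are local primitives of the single form $\eta$ and the cocycle is concentrated near $p$. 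For (iii): the \v{C}ech--de Rham procedure, using a cutoff function equal to $1$ near $p$, converts this cocycle into a global smooth $2$‑form $\Omega$ on $C$ supported near $p$, and Stokes' theorem gives $\int_C\Omega=2\pi i\operatorname{Res}_p(\eta)$ (matching, up to the normalization of the trace map of the Verdier duality, the value paired against $\tilde s_0$); since $\operatorname{Res}_p(\eta)=1/k\neq 0$ we conclude $\delta'\delta(\bar h)\neq 0$.

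The main obstacle is step (iii): one must match the abstract Poincar\'e--Verdier pairing of Lemma \ref{Lem:dimH2K} with the concrete residue, carrying the \v{C}ech cocycle $(F_\beta-F_\gamma)$ correctly through $\check{H}^2(\mathcal U,\mathcal K)\to H^2(C,\mathcal K)=H^2_c(\tilde C,\widetilde{\mathcal K})\cong H^0(\tilde C,\widetilde{\mathcal K}^\vee)^\vee$ and verifying that, although the cocycle is supported near $p$, its class is governed by the period of $\eta$ around $p$ and hence equals a nonzero multiple of $\operatorname{Res}_p(\eta)$ rather than $0$. Arranging the cover so that all the required \v{C}ech lifts exist is routine but must be done with care; once $\eta=\tfrac1k h_0\tilde s_0=\tfrac1k\,dz/z$ is in hand, the residue itself is immediate.
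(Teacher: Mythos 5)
Your proof is correct and follows essentially the same route as the paper: you push a class of $H^0(\mathcal{N})$ supported at a single pole $p\in P$ through the two connecting maps via explicit \v{C}ech lifts, trivialize $\mathcal{K}$ near $p$ using the $k$-th root $\tilde s_0$ (your $h_0=1/(z\tilde h)$ agrees up to a constant with the paper's choice $H=k f_0/z$), and detect nonvanishing through the nontrivial period $2\pi i/k$ of $\eta=\tfrac{1}{k}\,dz/z$ around $p$, i.e.\ the monodromy of the multivalued primitive. The only differences are technical: the paper works with a slit cover and the ODE for the coefficient $f$, and delegates the final nonvanishing in $H^2(C,\mathbb{C})$ to a citation of Godement, whereas you use a general good cover and sketch the equivalent \v{C}ech--de Rham/Stokes evaluation of the class.
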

Indeed this implies that while the map $H^1(\mathcal{T}_C(-E)) \to H^1(\mathcal{I})$ is not surjective, its image together with the kernel of $H^1(\mathcal{I}) \to H^1(\mathcal{O}_C)$ generate the space $H^1(\mathcal{I})$ and therefore $H^1(\varphi)$ is surjective.

\begin{proof}[Proof of Lemma \ref{Lem:compononzero}]
 Note that by Lemma \ref{Lem:dimH2K}, the space $H^2(\mathcal{K})$ is one-dimensional, so indeed the composition above is surjective iff it is nonzero. We will start with a suitable nonzero element in $H^0(\mathcal{N})$ and show that its image under $\delta' \circ \delta$ is nonzero.
 
 For this observe that $H^0(\mathcal{N}) = \bigoplus_{p \in P} \mathbb{C}$, i.e. we have a direct summand $\mathbb{C}$ for every $p=p_i$ with $m_i<0$ and divisible by $k$. We show that every one of these summands maps surjectively to $H^2(\mathcal{K})$. Let $p_i \in P$, then $\delta$ is a boundary map for the exact sequence
 \[0 \to \mathcal{I} \to \mathcal{O}_C \to \mathcal{N} \to 0.\]
 Cover $C$ by a small open disc $D$  around $p$ (such that $D \cap P = \{p_i\}$) and $U=C \setminus \{p\}$, then we can find a function $h \in \mathcal{O}_C(D)$ mapping to some nonzero $\lambda \in H^0(D,\mathcal{N}) = \mathbb{C}$. That is, there exists no solution $f$ of the equation $k \frac{df}{dz} + f \frac{d\log(g)}{dz} = h$ around $p$. Then the functions $h$ on $D$ and $0$ on $U$ map to the restrictions of the global section $\lambda \in H^0(D,\mathcal{N}) \subset H^0(C,\mathcal{N})$ to the cover $D,U$ of $C$.
 
 Thus the \v{C}ech $1$-cycle 
 \[(U \cap D, h|_{U \cap D}) \in H^1(\mathcal{I})\]
 is the image of $\lambda$ under $\delta$.
 
 For the map $\delta'$ coming from the exact sequence
 \[0 \to \mathcal{K} \to \mathcal{T}_C(-E) \xrightarrow{\varphi} \mathcal{I} \to 0\]
 we need to refine our cover of $C$: take still $D$ the open disc around $p_i$ and let $U^+ = C \setminus s^+$, $U^-=C \setminus s^-$, where $s^+, s^-$ are short path segments, starting in $p_i$ such that $D^+=D \cap U^+, D^-=D \cap U^-$ are simply connected (see also Figure \ref{Fig:cocycle}). Thus on $D^+,D^-$ we find vector fields $v^+=f^+ \frac{d}{d z}$, $v^-=f^- \frac{d}{d z}$ both mapping via $\varphi$ to the restrictions of $h$ to $D^+,D^-$. That is, the functions $f^\pm$ satisfy
 \begin{equation} \label{eqn:diffeq} k \frac{d f^\pm}{dz} + f^\pm \frac{d\log(g)}{dz} = h.\end{equation}
 This follows from the proof of Proposition \ref{Pro:kercoker} as $D^\pm$ are simply connected. Thus the difference $f^+-f^-$ on $D \cap U^+ \cap U^-$ solves the corresponding homogeneous equation describing the kernel $\mathcal{K}$ of $\varphi$ . By our assumptions, this kernel has a section $v_0 = f_0 \frac{d}{dz}$ on all of $D \setminus \{p_i\}$. Thus $f^+-f^-$ restricts to $a f_0, b f_0$ on the two components of $D \cap U^+ \cap U^-$ for some $a,b \in \mathbb{C}$. 
 
 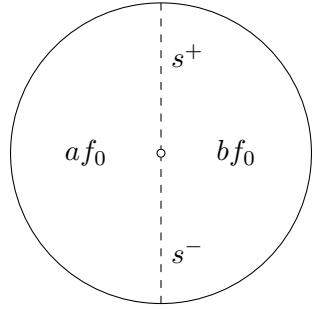
\begin{figure}[htb]
  \label{Fig:cocycle}
  \begin{tikzpicture}
   \draw (0,0) circle (2);
   \draw[dashed] (0,0) -- (0,1.3) node[right]{$s^+$} -- (0,2);
   \draw[dashed] (0,0) -- (0,-1.3) node[right]{$s^-$} -- (0,-2);
   \filldraw[white] (0,0) circle (1.5pt);
   \draw (0,0) circle (1.5 pt); 
   \draw (-1,0) node{$a f_0$};
   \draw (1,0) node{$b f_0$};
  \end{tikzpicture}
  \caption{The restriction of $f^+-f^-$ to $D \cap U^+ \cap U^-$}
 \end{figure}
 Then $\delta'(\delta(\lambda))$ is given by $(D \cap U^+ \cap U^-, (f^+ - f^-) \frac{d}{dz}) \in H^2(C,\mathcal{D})$. 
 
 To show that this cycle class is nonzero, we first show that $a \neq b$. For this, we will go back to the computation of $\delta(\lambda)$ and adapt our choice of $h$. Consider the (multivalued) function $F(z)=f_0(z) \log(z)$ on $D$. It satisfies
 \begin{align*}
  k \frac{dF}{dz} + F(z) \log(g(z)) &= \frac{k f_0(z)}{z} + \underbrace{\left(k \frac{df_0}{dz} + f_0(z) \log(g(z)) \right)}_{=0} \log(z)\\ &= \frac{k f_0(z)}{z}=H.
 \end{align*}
 Then $H$ is single-valued and as $f_0$ (coming from $\mathcal{T}_C(-E)$) vanishes at $z=0$, it is holomorphic. Moreover, the solution space of the equation (\ref{eqn:diffeq}) with $h=H$ is given by $F + C f_0$ for $C \in \mathbb{C}$. Thus there exists no holomorphic solution around $p$ and $H$ maps to a nonzero element $\lambda' \in H^0(\mathcal{N})$. But then for $\delta'(\delta(\lambda'))$ we have $b-a=2 \pi i$, so in particular $a \neq b$. 
 
 Note that by assumption $\widetilde{\mathcal K} = \underline{\mathbb{C}}$ on $\tilde C$. Thus the injection $i_! \widetilde{\mathcal K} \to \underline{\mathbb{C}}$ induces a map of the second cohomology groups and the image of the two-cycle above in $H^2(C,\mathbb{C})$ is nonzero (see \cite[Chapter IV, Section 4.3]{scrap}). This finishes the proof.
\end{proof}

\section{Conjectural relation to Pixton's formula} \label{Sect:conjecture}
As in the appendix of \cite{fp} we want to state for any $k\geq 1$ a conjectural relation between a weighted fundamental class of $\widetilde{\mathcal H}_g^k(\mu)$ and an explicit element of the tautological ring $R(\overline{M}_{g,n})$ described by Pixton in \cite{pixtondr} and \cite{drcycles}.
\subsection{The weighted fundamental class of \texorpdfstring{$\widetilde{\mathcal H}_g^k(\mu)$}{tilde Hgk(mu)}} \label{Sect:Hgkmu}
As a first step, we identify the components $Z$ of $\widetilde{\mathcal H}_g^k(\mu)$ with dimension $2g-3+n$. Let $\Gamma$ be the dual graph of such a component, then the proof of \cite[Proposition 7]{fp} shows that 
\begin{enumerate}
 \item[(a)] there exists a unique center vertex $v_0$ in $\Gamma$ such that all edges of $\Gamma$ connect $v_0$ to one of the outlying vertices $v_1, \ldots, v_r$ or they are self-loops at $v_0$,
 \item[(b)] the vertices $v_1, \ldots, v_r$ can only have markings $p_i$ with nonnegative weigths $m_i$ divisible by $k$.
\end{enumerate}
Concerning the self-loops, Corollary \ref{Cor:irrsmooth} shows that for $\mathcal{M}_{g,n}^{\text{irr}} \subset \overline{\mathcal{M}}_{g,n}$ the locus of irreducible curves, we have
\[\overline{\mathcal{H}}_g^k(\mu) \cap \mathcal{M}_{g,n}^{\text{irr}} = \widetilde{\mathcal H}_g^k(\mu) \cap \mathcal{M}_{g,n}^{\text{irr}}.\]
Hence there is no component of $\widetilde{\mathcal H}_g^k(\mu)$ whose general element is an irreducible curve with at least one (self)node. This implies the additional restriction that
\begin{enumerate}
 \item[(c)] at the vertex $v_0$ of $\Gamma$ there is no self-loop. 
\end{enumerate}
For later use, recall that there is a gluing morphism
\[\xi_\Gamma : \prod_{v \in V(\Gamma)} \overline{\mathcal{M}}_{g(v),n(v)} \to \overline{\mathcal{M}}_{g,n},\]
where $g(v)$ is the genus of a vertex and $n(v)$ is the total number of markings and edges connected to the vertex $v$.

Let $S_{g,\mu}^k$ be the set of dual graphs satisfying the conditions (a), (b) and (c) above, called \emph{star graphs}. For $\Gamma \in S_{g,\mu}^k$ let $V_{\text{out}}(\Gamma)$ be the set of outlying vertices. A \emph{twist} for $\Gamma$ is a function 
\[I : E(\Gamma) \to k \mathbb{Z}_{>0}\]
such that
\begin{itemize}
 \item for the center vertex $v_0$ we have
 \[k(2 g(v_0) -2) + \sum_{e \mapsto v_0} (I(e)+k) = \sum_{i \mapsto v_0} m_i, \]
 \item for each $v_j \in V_{\text{out}}(\Gamma)$ we have
 \[k(2 g(v_j) -2) + \sum_{e \mapsto v_j} (-I(e)+k) = \sum_{i \mapsto v_j} m_i. \]
\end{itemize}
We denote by $\text{Tw}(\Gamma)$ the set of possible twists. For a vertex $v$ of $\Gamma$ let $\mu[v]$ be the vector of the weigths of markings $p_i$ mapping to $v$ and let $I[v]-k$ and $-I[v]-k$ be the vectors of numbers $I(e)-k$ and $-I(e)-k$ for edges $e$ adjacent to $v$.

Now if $Z \subset \widetilde{\mathcal H}_g^k(\mu)$ is a component of dimension $2g-3+n$ we must have that the generic dual graph $\Gamma$ is in $S_{g,\mu}^k$ and that there exists a twist $I \in \text{Tw}(\Gamma)$ such that $Z$ is a component of the closed set
\[\xi_\Gamma \left(\overline{\mathcal{H}}_{g}^k(\mu[v_0],-I[v_0] -k) \times \prod_{v \in V_{\text{out}}(\Gamma)} \overline{\mathcal{H}}_{g}^1\left(\frac{\mu[v]}{k},\frac{I[v]-k}{k}\right)\right).\]

We define a weighted fundamental class $H_{g,\mu}^k$ of $\widetilde{\mathcal H}_g^k(\mu)$ by the formula
\begin{align}
 H_{g,\mu}^k = \sum_{\Gamma \in S_{g,\mu}^k} \sum_{I \in \text{Tw}(\Gamma)} \frac{\prod_{e \in E(\Gamma)} I(e)}{|\text{Aut}(\Gamma)| \cdot k^{|V_{\text{out}}(\Gamma)|}} (\xi_\Gamma)_* \big[\left[\overline{\mathcal{H}}_{g(v_0)}^k(\mu[v_0],-I[v_0] -k)\right]\nonumber \\\cdot \prod_{v \in V_{\text{out}}(\Gamma)} \left[\overline{\mathcal{H}}_{g(v)}^1\left(\frac{\mu[v]}{k},\frac{I[v]-k}{k}\right)\right]\big] \label{eqn:Hgmukformula}
\end{align}

\subsection{Pixton's tautological class}
Consider now the shifted vector
\[\tilde \mu = (\tilde m_1, \ldots, \tilde m_n), \tilde m_i = m_i + k.\]
Then in \cite[Section 1.1]{drcycles} a tautological class
\[P_{g,\mu}^{g,k} = P_g^{g,k}(\tilde \mu) \in R^g(\overline{\mathcal{M}}_{g,n})\]
is defined. We make the following conjecture, extending the case $k=1$ from \cite{fp}.

\noindent \textbf{Conjecture A.} For $k \geq 1$ and $\mu$ not of the form $\mu = k \mu'$ for a nonnegative partition $\mu'$ of $2g-2$, we have \[H_{g,\mu}^k = 2^{-g} P_{g,\mu}^{g,k}.\]

With Theorem \ref{Theo:dim} we see that the condition on $\mu$ above exactly ensures that all components of $\widetilde{\mathcal H}_g^k(\mu)$ are of codimension $g$. However, we also want to propose a conjecture in the case $\mu = k \mu'$ with $\mu'$ holomorphic. Here we have noted that the codimension $g-1$ part of $\widetilde{\mathcal H}_g^k(\mu)$ is exactly given by
\[\overline{\mathcal{H}}_g^1(\mu') \subset \widetilde{\mathcal H}_g^k(\mu).\]
Now the idea is to use the formula of Conjecture A for $k=1$ and $\mu'$ to assign to this locus a virtual fundamental class of codimension $g$. Denote by $\text{Cont}_{g,\mu}^k(\Gamma, I)$ the contribution of graph $\Gamma$ and twist $I$ to $H_{g,\mu}^k$ in (\ref{eqn:Hgmukformula}). Then this virtual fundamental class is defined by the formula
\begin{align*}
 \left[\overline{\mathcal{H}}_g^1(\mu') \right]^{\text{vir}} + \sum_{\substack{\Gamma \in S_{g,\mu'}^1\\\Gamma\text{ nontrivial}}} \sum_{I \in \text{Tw}(\Gamma)} \text{Cont}_{g,\mu'}^1(\Gamma, I) = 2^{-g} P_{g,\mu'}^{g,1}.
\end{align*}
The conjecture we want to propose is that when we write down the formula of Conjecture A for $k$ and $\mu=k \mu'$ and replace the codimension $g-1$ part $\left[\overline{\mathcal{H}}_g^1(\mu') \right]$ of $\left[\overline{\mathcal{H}}_g^k(\mu) \right]$ by $\left[\overline{\mathcal{H}}_g^1(\mu') \right]^{\text{vir}}$, we obtain a true equality of codimension $g$ cycle classes. To make this precise, let
\[\mathcal{H}_g^k(\mu)' = \mathcal{H}_g^k(\mu) \setminus \mathcal{H}_g^1(\mu').\]
Then we propose the following.

\noindent \textbf{Conjecture A$'$.} For $k \geq 1$ and $\mu=k\mu'$ for a nonnegative partition $\mu'$ of $2g-2$, we have \[\left[\overline{\mathcal{H}}_g^1(\mu') \right]^{\text{vir}} + \left[ \overline{\mathcal{H}}_g^k(\mu)'\right] + \sum_{\substack{\Gamma \in S_{g,\mu}^k\\\Gamma\text{ nontrivial}}} \sum_{I \in \text{Tw}(\Gamma)} \text{Cont}_{g,\mu}^k(\Gamma, I)= 2^{-g} P_{g,\mu}^{g,k}.\]


\subsection{Examples}
For a list of examples where the case $k=1$ of Conjecture A has been verified see \cite[Section A.5]{fp}.
\subsubsection{Genus at most 1}
For $g=0$, Conjecture A is trivial and true for all $k \geq 1$ and Conjecture A$'$ is empty, as there is no nonnegative partition of $-2$.

For $g=1$ and a partition $\mu$ of $k(2g-2)=0$ we can first look at $H_{1,\mu}^k$. As expected we have a contribution of $[\overline{\mathcal{H}}_{1}^k(\mu)]$ from the trivial star graph. But $\mathcal{H}_{1}^k(\mu)$ parametrizes points $(C,p) \in \overline{\mathcal{M}}_{1,n}$ with $\mathcal{O}_C(\sum_{i} m_i p_i) = \mathcal{O}_C$, so it is independent of $k$ and we have
\[[\overline{\mathcal{H}}_{1}^k(\mu)] = [\overline{\mathcal{H}}_{1}^1(\mu)].\]
As outlying vertices must have genus at least $1$, we only have one more class of star graphs $\Gamma$ contributing to $H_{1,\mu}^k$, namely those with 
\begin{itemize}
 \item exactly one edge $e$ between a genus $0$ vertex $v_0$ and a genus $1$ vertex $v_1$,
 \item all markings going to $v_1$ having weight $0$,
 \item the unique twist $I(e)=k$.
\end{itemize}
For $I \subset \{1, \ldots, n\}$ with $|I|\leq n-2$ denote by $\delta_{I}$ the divisor in $\overline{\mathcal{M}}_{1,n}$ of curves with two components of genera $0$ and $1$ where the genus $1$ component carries the markings in $I$. Then we have
\[H_{1,\mu}^k = [\overline{\mathcal{H}}_{1}^1(\mu)] + \sum_{I \subset \{i:m_i=0\}, |I| \leq n-2} \delta_I.\]
In particular this is independent of $k$, so $H_{1,\mu}^k = H_{1,\mu}^1$. 

On the other hand we compute directly from the definition in \cite{drcycles} that
\[P_{1,\mu}^{1,k} = -k^2 \kappa_1 + \sum_{i=1}^n (m_i+k)^2 \psi_i - \frac{1}{12} \delta_{\text{irr}} - \sum_{|I| \leq n-2} (k-\sum_{i \in I} m_i)^2 \delta_I.\]
Here $\kappa_1=\pi_*(\psi_{n+1}^2)$ is the first kappa-class (with $\pi: \overline{\mathcal{M}}_{g,n+1} \to \overline{\mathcal{M}}_{g,n}$ the forgetful map) and 
\[\delta_{\text{irr}} = \frac{1}{2} \xi_* \left[ \overline{\mathcal{M}}_{0,n+2} \right], \quad \xi:\overline {\mathcal{M}}_{0,n+2} \to \overline{\mathcal{M}}_{0,n}\]
is the divisor class of curves with a non-separating node in $\overline{\mathcal{M}}_{1,n}$. 

At first this seems to depend on $k$. However, using the relations
\[\kappa_1=\sum_{i=1}^n \psi_i - \sum_{|I| \leq n-2} \delta_I, \psi_i = \frac{1}{12} \delta_{\text{irr}} + \sum_{I \not \ni i, |I|\leq n-2} \delta_I\]
a straightforward computation shows that
\[P_{1,\mu}^{1,k} = \sum_{i=1}^n m_i^2 \psi_i - \frac{1}{6} \delta_{\text{irr}} - \sum_{|I| \leq n-2} (\sum_{i \in I} m_i)^2 \delta_I.\]
This expression is now also independent of $k$ so $P_{1,\mu}^{1,k} = P_{1,\mu}^{1,1}$. Hence Conjecture A for $k\geq 1$ follows from the case $k=1$, which was already shown in \cite{fp}. On the other hand Conjecture A$'$ is also true, as all terms for $k=1$ and $\mu'=(0, \ldots, 0)$ exactly cancel the terms for $k$ and $\mu$.

\subsubsection{Genus 2} \label{Sect:ExaGen2}
For genus $g=2$ and $k=2$ below we check Conjectures A and A$'$ in two cases each. Here the first such test is presented in considerable detail, listing all star graphs and their contributions. For the remaining three, we only give indications how the classes $[\overline{\mathcal{H}}_g^k(\mu)], [\overline{\mathcal{H}}_g^k(\mu)']$ can be determined, as the remaining contributions are straightforward to compute.

As a first example, we look at the partition $\mu=(3,1)$. What makes this case easy to check is that $\overline{\mathcal{H}}_2^2(3,1) = \emptyset$ (see \cite[Theorem 2]{masursmillie}), so the trivial graph does not give a contribution to $H_{2,(3,1)}^2$. The terms coming from nontrivial graphs are listed below, where vertices are labelled with their genus and edges with their twist. Here the central vertex is always the vertex on the left.

\vspace{5 pt}
\begin{tabular}{cl}
 Graph $\Gamma$ & Contribution\\
 \raisebox{-0.5\height}{\begin{tikzpicture}
  \filldraw (0,0) circle (2pt) node[above]{\tiny{\textbf{1}}};
  \draw (0,0) -- (-0.5,0.15) node[left]{\tiny{$3$}};
  \draw (0,0) -- (-0.5,-0.15) node[left]{\tiny{$1$}};
  \draw (0,0) -- (1,0) node[above]{\tiny{$2$}} -- (2,0);
  \filldraw (2,0) circle (2pt) node[above]{\tiny{\textbf{1}}};  
 \end{tikzpicture}}
 & $(\xi_\Gamma)_* \left[\left[\overline{\mathcal{H}}_{1}^2(3,1,-4)\right] \cdot  \left[\overline{\mathcal{M}}_{1,1}\right]\right]$\\
 \raisebox{-0.5\height}{\begin{tikzpicture}
  \filldraw (0,0) circle (2pt) node[above]{\tiny{\textbf{0}}};
  \draw (0,0) -- (-0.5,0.15) node[left]{\tiny{$3$}};
  \draw (0,0) -- (-0.5,-0.15) node[left]{\tiny{$1$}};
  \draw (0,0) -- (1,0) node[above]{\tiny{$6$}} -- (2,0);
  \filldraw (2,0) circle (2pt) node[above]{\tiny{\textbf{2}}};  
 \end{tikzpicture}}
 & 3 $(\xi_\Gamma)_* \left[\left[\overline{\mathcal{M}}_{0,3}\right] \cdot  \left[\overline{\mathcal{H}}_{2}^1(2)\right]\right]$\\
 \raisebox{-0.5\height}{\begin{tikzpicture}
  \filldraw (0,0) circle (2pt) node[above]{\tiny{\textbf{0}}};
  \draw (0,0) -- (-0.5,0.15) node[left]{\tiny{$3$}};
  \draw (0,0) -- (-0.5,-0.15) node[left]{\tiny{$1$}};
  \draw (0,0) -- (1,0.4) node[above]{\tiny{$2$}} -- (2,0.8);
  \filldraw (2,0.8) circle (2pt) node[above]{\tiny{\textbf{1}}};  
  \draw (0,0) -- (1,-0.4) node[above]{\tiny{$2$}} -- (2,-0.8);
  \filldraw (2,-0.8) circle (2pt) node[above]{\tiny{\textbf{1}}};  
 \end{tikzpicture}}
 & $\frac{1}{2} (\xi_\Gamma)_* \left[\left[\overline{\mathcal{M}}_{0,4}\right] \cdot  \left[\overline{\mathcal{M}}_{1,1}\right] \cdot \left[\overline{\mathcal{M}}_{1,1}\right] \right]$\\
 \raisebox{-0.5\height}{\begin{tikzpicture}
  \filldraw (0,0) circle (2pt) node[above]{\tiny{\textbf{0}}};
  \draw (0,0) -- (-0.5,0.15) node[left]{\tiny{$3$}};
  \draw (0,0) -- (-0.5,-0.15) node[left]{\tiny{$1$}};
  \draw (0,0) to [bend left=20] (2,0);
  \draw (0,0) to [bend right=20] (2,0);
  \draw (1,0.2) node[above]{\tiny{$2$}};
  \draw (1,-0.2) node[below]{\tiny{$2$}};
  \filldraw (2,0) circle (2pt) node[above]{\tiny{\textbf{1}}};  
 \end{tikzpicture}}
 & $(\xi_\Gamma)_* \left[\left[\overline{\mathcal{M}}_{0,4}\right] \cdot  \left[\overline{\mathcal{M}}_{1,2}\right]\right]$
\end{tabular}

Here the class $[\overline{\mathcal{H}}_{1}^2(3,1,-4)]$ is obtained from the case $g=1$ of Conjecture A verified above whereas the class $[\overline{\mathcal{H}}_{2}^1(2)]$ has been computed in \cite{eisenbudharris}. Summing up the contributions above we obtain a tautological cycle which exactly equals $2^{-2} P_{2,(3,1)}^{2,2}$. To verify this equality one uses known relations in the tautological ring. 

Another check for Conjecture A is possible for $g=2,k=2$ and $\mu=(2,1,1)$. Here, by \cite[Theorem 1.2]{lanneau} (or an elementary argument involving the Residue theorem), we have
\[\mathcal{H}_2^2(2,1,1) = \left\{(C,q,p_1, p_2) : \parbox{0.4\linewidth}{$q$ Weierstrass point, \\ $p_1, p_2$  hyperelliptic conjugate}\right\}.\]
The class of the closure of this locus has been computed in \cite[Lemma 3]{belopandh}. As for the other terms appearing in $H_{2,(2,1,1)}^2$, all of them are obtained from the genus $1$ case of Conjecture A or in \cite{belopandh}. Again the statement of Conjecture A for $\mu=(2,1,1)$ is true.

Concerning Conjecture A$'$, we are able to verify it in two cases, namely $g=2, k=2$ and $\mu=(4)$ or $\mu=(2,2)$. For the partition $\mu=(4)$, it follows from \cite[Theorem 1.2]{lanneau} that we have
\[\mathcal{H}_2^2(4) = \mathcal{H}_2^1(2),\]
so $\mathcal{H}_2^2(4)'=0$. All contributions from nontrivial graphs are easily computed and the conjecture holds in this case.

For $\mu=(2,2)$ we find that $(C,p,q) \in \mathcal{H}_2^2(2,2)$ iff either $p,q$ are hyperelliptic conjugate (i.e. $(C,p,q) \in \mathcal{H}_2^1(1,1)$) or if both $p,q$ are Weierstrass points. Thus we have
\[\mathcal{H}_2^2(2,2)' = \{(C,p,q): p,q \text{ Weierstrass points}\}\]
and the class of the closure of this locus has been computed by Tarasca in \cite[Theorem 0.1]{tarasca} (under the name $[\overline{\mathcal{DR}}_2(2)]$). Again all other terms are computed via the genus $1$ case of Conjecture A and the claimed relation holds.

All four equalities in this section were checked by Aaron Pixton, using a computer program to expand the cycles $2^{-g} P_{g,\mu}^{g,k}$ in terms of tautological classes and then using known relations (\cite{ppz}) in the tautological ring to show the equality to $H_{g,\mu}^k$. We gratefully acknowledge his help.

\subsection{Recursions for \texorpdfstring{$\overline{\mathcal{H}}_g^k(\mu)$}{bar Hgk(mu)}}
Let $k \geq 1$ and let $\mu$ be a partition of $k(2g-2)$ not of the form $\mu=k \mu'$ for a nonnegative partition $\mu'$ of $2g-2$. Assuming Conjecture A above, we can determine an expression for $\overline{\mathcal{H}}_g^k(\mu)$ in terms of tautological classes. Indeed, in the formula for $H_{g,\mu}^k$, the term $[\overline{\mathcal{H}}_g^k(\mu)]$ appears with a coefficient $1$. All other terms are composed from cycles
\begin{itemize}
 \item $[\overline{\mathcal{H}}_{g'}^k(\mu')]$ for $g'<g$ (as outlying vertices must have genus at least $1$),
 \item $[\overline{\mathcal{H}}_{g''}^1(\mu'')]$, which were determined in \cite{fp}.
\end{itemize}
Hence in the equation $H_{g,\mu}^k = 2^{-g} P_{g,\mu}^{g,k}$, we can solve for $[\overline{\mathcal{H}}_g^k(\mu)]$ and determine these cycles by induction on $g$.

\begin{appendix}
\section{Smoothing of untwisted nodes for \texorpdfstring{$k$}{k}-differentials}
In this section, for the convenience of the reader, we give an elementary argument showing that an untwisted node in a curve $(C,p_1, \ldots, p_n) \in \widetilde{\mathcal H}_{g}^k(\mu)$ can be smoothed in a holomorphic $1$-parameter family inside $\widetilde{\mathcal H}_{g}^k(\mu)$. The proof below was explained to us by Dimitri Zvonkine. 

Let $\mathbb{D} =  \{z : |z| < 1\} \subset \mathbb{C}$ be the unit disc.
\begin{Pro}
 Let $g,n,k \geq 0$ with $2g-2+n>0$ and let $\mu$ be a partition of $k(2g-2)$. Assume we have  \[(C,p_1, \ldots, p_n) \in \widetilde{\mathcal H}_{g}^k(\mu)\subset \overline{\mathcal{M}}_{g,n}\]
 where $C$ has a node $q$ at the intersection of two components $D,D'$ (where possibly $D=D'$) such that there exists a twist $I$ for the divisor $\sum_{i} m_i p_i$ with $I(q,D)=I(q,D')=0$. 
 
 Then there exists a holomorphic $1$-parameter family $\pi: \mathcal{C} \to \mathbb{D}$ and sections $\sigma_1, \ldots, \sigma_n : \mathbb{D} \to \mathcal{C}$ with central fibre 
 \[(\mathcal{C}_0, \sigma_1(0), \ldots, \sigma_n(0)) \cong (C,p_1, \ldots, p_n),\]
 smoothing the node $Q$ such that the induced analytic map $\varphi: \mathbb{D} \to \overline{\mathcal{M}}_{g,n}$ has image in $\widetilde{\mathcal H}_{g}^k(\mu)$.
\end{Pro}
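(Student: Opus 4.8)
The plan is to construct the smoothing family explicitly by working locally near the node and then patching with the rest of the curve. Since the node $q$ lies at the intersection of components $D, D'$ and admits a twist $I$ with $I(q,D) = I(q,D') = 0$, the $k$-twisted canonical condition tells us that the $k$-differential extends across $q$ without any pole/zero discrepancy along the normalization. Concretely, choosing local coordinates $x$ on $D$ and $y$ on $D'$ near $q$ (so that $xy = 0$ is the local equation of $C$ near $q$), the $k$-differential $s_0$ is represented near $q$ on the $D$-branch by $f(x)(dx/x)^{\otimes k}$ and on the $D'$-branch by $g(y)(dy/y)^{\otimes k}$, where the untwistedness ($I = 0$) forces $f(0) = (-1)^k g(0) \neq 0$ (the sign coming from the residue-matching convention; if $f(0) = g(0) = 0$ one adjusts, but the key point is that the orders agree and no twisting is needed). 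The standard family smoothing the node is $\mathcal{C} = \{xy = t\} \subset \mathbb{D}_x \times \mathbb{D}_y \times \mathbb{D}_t$ near $q$, glued to $(C \setminus \{q\}) \times \mathbb{D}_t$ along the annular overlap.

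**Next I would** extend the $k$-differential over this family. On the smoothing chart $\{xy = t\}$, the relative dualizing sheaf $\omega_{\mathcal{C}/\mathbb{D}}$ is generated near $q$ by the section which restricts to $dx/x = -dy/y$ on the central fibre; more precisely, $\omega_{\mathcal{C}/\mathbb{D}}$ is locally free with generator $\eta$ satisfying $\eta|_{y} = dx/x$ and $\eta|_{x} = -dy/y$ up to the relation $x\,dy + y\,dx = 0$. Hence one can write down a section $S$ of $\omega_{\mathcal{C}/\mathbb{D}}^{\otimes k}$ near $q$ of the form $S = F(x,y,t)\,\eta^{\otimes k}$ where $F$ interpolates between $f$ and $g$: for instance take $F$ so that on $t = 0$ it recovers $f$ on the $x$-axis and $(-1)^k g$ on the $y$-axis, which is possible precisely because these agree at the node. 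On $(C \setminus \{q\}) \times \mathbb{D}_t$ one simply takes the constant family $s_0$. These agree on the overlap, so they glue to a global meromorphic section $S$ of $\omega_{\mathcal{C}/\mathbb{D}}^{\otimes k}$ whose divisor, after possibly shrinking $\mathbb{D}$, is exactly $\sum_i m_i \sigma_i(\mathbb{D})$ — the zero/pole orders at the markings are locally constant in $t$ since they are away from the node and vary holomorphically. Each fibre $\mathcal{C}_t$ for $t \neq 0$ is then a (possibly still nodal, but with the chosen node smoothed) curve carrying a $k$-differential with divisor $\sum_i m_i \sigma_i(t)$, hence lies in $\mathcal{H}_g^k(\mu)$ if smooth, or in $\widetilde{\mathcal H}_g^k(\mu)$ via the remaining untwisted nodes; in either case $\varphi(t) \in \widetilde{\mathcal H}_g^k(\mu)$, and $\varphi(0) = (C,p)$.

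**The main obstacle** I expect is the careful bookkeeping of the relative dualizing sheaf near the node and verifying that $F$ can genuinely be chosen holomorphic in all of $x, y, t$ with the prescribed restriction on $t = 0$, particularly when $D = D'$ (a self-node), where the two branches of the differential are cut from the \emph{same} meromorphic $k$-differential on the normalization and the matching condition at the two preimages of $q$ must be checked to be compatible with a global extension — here the untwistedness hypothesis is exactly what guarantees this. One must also confirm that after shrinking $\mathbb{D}$ no new zeros or poles of $S$ appear away from the $\sigma_i$, which follows from upper-semicontinuity of vanishing order together with the fact that $\deg_{\mathcal{C}_t} \omega_{\mathcal{C}_t}^{\otimes k} = k(2g-2) = \sum_i m_i$ is constant, so no "mass" can escape. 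Finally, one checks the induced map $\varphi: \mathbb{D} \to \overline{\mathcal{M}}_{g,n}$ is analytic (it is, being the classifying map of a holomorphic family) and lands in the closed substack $\widetilde{\mathcal H}_g^k(\mu)$ because every fibre is $k$-twisted canonical, with the empty twist on the newly smoothed node and the old twists $I$ elsewhere.
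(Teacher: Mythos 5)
Your setup (the local model $\{xy=t\}$ glued to the constant family on the complement of the node, and the observation that the untwisted node forces the leading coefficients on the two branches to match up to $(-1)^k$) coincides with the paper's. But there is a genuine gap at the gluing step. You take $S=F(x,y,t)\,\eta^{\otimes k}$ with $F$ any interpolation of $f$ and $(-1)^k g$, take the constant family $s_0$ outside, and assert ``these agree on the overlap, so they glue.'' They do not, for $t\neq 0$. The overlap is glued by $(x,t)\mapsto (x,t/x,t)$ (resp.\ $(y,t)\mapsto (t/y,y,t)$), so the pullback of $S$ to the annulus $A_x\times\mathbb{D}$ is $F(x,t/x,t)\,(dx/x)^{k}$, while the constant family gives $f(x)\,(dx/x)^{k}$; with the natural choice $F(x,y,t)=f(x)+(-1)^kg(y)-f(0)$ the difference is $(-1)^k\bigl(g(t/x)-g(0)\bigr)$, which is nonzero for $t\neq 0$ unless $g$ is constant. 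Worse, no choice of $F$ can repair this within your setup: requiring $F(x,y)=f(x)$ on $\{|x|>1\}$ and $F(x,y)=(-1)^{2k}g(y)$ on $\{|y|>1\}$ forces $f(x)=g(y)$ on the region where both hold, hence $f$ and $g$ constant by the identity theorem. So the constant family outside can only be glued if the differential near the node is already in the exact standard form $a\,(dx/x)^k$, $(-1)^k a\,(dy/y)^k$ in the chosen coordinates.

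This is precisely the technical heart of the paper's proof, which your proposal skips: before constructing the family one performs a holomorphic change of coordinate $x=x(w)$ on each branch, obtained by solving the nonlinear ODE $f(x(w))\bigl(\tfrac{dx}{dw}\tfrac{w}{x(w)}\bigr)^{k}=1$ (separation of variables, taking a $k$-th root of $f$), so that $\eta_x=a\,(dw/w)^k$ and $\eta_y=(-1)^k a\,(dy/y)^k$ \emph{exactly}, with no higher-order terms. Only then does the relative $k$-differential $a\,(dx/x)^k=(-1)^k a\,(dy/y)^k$ on $\{xy=t\}$ (extended over the origin by the Riemann extension theorem) pull back to something independent of $t$ on the gluing annuli, so that it matches the constant family $s_0$ outside; this also makes your ``no mass escapes'' worry moot, since the glued section is visibly nonvanishing near the node on every fibre. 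Your remaining remarks (analyticity of $\varphi$, the self-node case, keeping the old twists elsewhere) are fine, but without the coordinate normalization the construction as written does not produce a family of twisted $k$-differentials.
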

\begin{proof}
 Let $U \subset C$ be an analytic neighbourhood of $Q$ with an isomorphism $\psi: U \xrightarrow{\sim} V(xy) \subset \mathbb{C}^2$ and not containing any of the points $p_1, \ldots, p_n$. Let 
 \[V=C \setminus \psi^{-1}(V(xy) \cap \{(x,y): |x| \leq 1, |y| \leq 1\}).\]
 Then $U,V$ form an open cover of $C$ and their intersection is a union of two annuli
 \[U \cap V \cong \underbrace{\{(x,0) : |x|>1\}}_{=:A_x} \amalg \underbrace{\{(0,y) : |y|>1\}}_{=:A_y}.\]
 We construct the family $\pi: \mathcal{C} \to \mathbb{D}$ by gluing the trivial family $\pi_V: V \times \mathbb{D} \to \mathbb{D}$ with constant sections $\sigma_i(t)=(p_i,t)$ to a family $\pi_U: \mathcal{U} \to \mathbb{D}$, which smoothes the node of the central fibre $\mathcal{U}_0 = U$. The gluing happens along the subset $(A_x \amalg A_y) \times \mathbb{D}$ of $V \times \mathbb{D}$.
 \begin{center}
  \begin{tikzcd}
    \mathcal{U} \arrow[hookleftarrow]{r}{i_{\mathcal{U}}} \arrow{d}& (A_x \amalg A_y) \times \mathbb{D} \arrow{d} \arrow[hookrightarrow]{r} & V \times \mathbb{D}\arrow{d}\\
   \mathbb{D} \arrow[-, double equal sign distance]{r} & \mathbb{D} \arrow[-, double equal sign distance]{r} &\mathbb{D} 
  \end{tikzcd}
 \end{center}
 After choosing suitable coordinates on $U$ (i.e. modifiying the isomorphism $\psi: U \to V(xy)$), the family $\mathcal{U}$ is easy to write down:
 \[\mathcal{U} = \{(x,y,t) \in \mathbb{C}^2 \times \mathbb{D} : xy-t=0\} \xrightarrow{(x,y,t) \mapsto t} \mathbb{D}\]
 and the map $i_{\mathcal{U}}$ sends $(x,t) \in A_x \times \mathbb{D}$ to $(x,t/x,t)$ and $(y,t) \in A_y \times \mathbb{D}$ to $(t/y,y,t)$. 
 
 It is easy to check that the above data defines a holomorphic family $\pi$ smoothing the node $q$ of the central fibre $(C,p_1, \ldots, p_n)$. The crucial point however is to ensure that the corresponding twisted differential deforms in this family, i.e. that the map $\mathbb{D} \to  \overline{\mathcal{M}}_{g,n}$ has image in $\widetilde{\mathcal H}_{g}^k(\mu)$.
 
 By assumption, for the partial normalization $\nu : C_I \to C$ resolving all nodes $p$ of $C$ with nonzero twist $I$ we have a section $\eta$ of $\nu^* \omega_C$ having zeroes and poles at the points $\nu^{-1}(p_i)$ with order $m_i$ and at the preimages of $p',p''$ of the nodes $p$ with orders determined by the twist $I$. As the twist $I$ vanishes at the node $q$, it is not normalized by $\eta$, so over $U \subset C$ the map $\eta$ is an isomorphism. Using this, we obtain a section $\eta_U \in \Gamma(U, \omega_U^{\otimes k})$. 
 
 On the horizontal and vertical components $V(y), V(x)$ of $U$, we can express the restrictions $\eta_x, \eta_y$ of $\eta_U$ in terms of the coordinate $x$ on $V(y)$ and $y$ on $V(x)$ as
 \begin{align*}
  \eta_x = a &(1 + b_{k-1}x + b_{k-2} x^2 + \ldots) \left(\frac{dx}{x} \right)^k,\\
  \eta_y = (-1)^k a &(1 + c_{k-1}y + c_{k-2} y^2 + \ldots) \left(\frac{dy}{y} \right)^k.
 \end{align*}
 Here the number $a \neq 0$ is invariant under coordinate changes. The fact that the order $(-k)$-coefficients of $\eta_x, \eta_y$ agree (up to a sign $(-1)^k$) follows from the usual fact that for $k=1$ the residues of the differentials on both components have to sum up to zero. Then for any local section $s$ of $\omega_C$ not vanishing around $q$, $s^k$ is a local section of $\omega^k$ around $q$ and all such sections are multiples of $s^k$. 
 
 In the case $k \geq 1$, we claim that after a holomorphic change of coordinates $x=x(w)$ with $x(0)=0$ we can assume that $\eta_x$ is of the form $\eta_x = a \left(\frac{dw}{w} \right)^k$. 
 Indeed, for $f(x)=1 + b_{k-1}x + b_{k-2} x^2 + \ldots$ as above choose a $k$-th root $g(x)$ of $f$ around $x=0$ with $g(0)=1$. 
 Then for a change of coordinates $x(w)$ we compute
 \begin{align*}
  \eta_x &= a f(x(w)) \left(\frac{d x(w)}{x(w)}\right)^k = a f(x(w)) \left(\frac{d x(w)}{dw} \frac{w}{x(w)}\right)^k \left(\frac{dw}{w} \right)^k.
 \end{align*}
 Thus we want to find a solution of the differential equation
 \begin{equation} \label{eqn:xweqn} f(x(w)) \left(\frac{d x(w)}{dw} \frac{w}{x(w)}\right)^k = 1.\end{equation}
 Dividing by $f(x(w))$ and taking a $k$th rooth, the equation is in particular satisfied if we have
 \[\frac{d x(w)}{dw} \frac{w}{x(w)} = g(x(w))^{-1}.\]
 We solve this equation by separation of variables. Define $R(z)=g(z)/z$ observing $R(z)=1/z + \tilde R(z)$ with $\tilde R$ holomorphic around $z=0$. Then we write the equation above as
 \[\frac{d x(w)}{dw} R(x(w)) = w^{-1}.\]
  Define the (multivalued) function
 \[S(u) = \int_{z_0}^u R(z) dz = (\log(u)-\log(z_0) ) + \underbrace{\int_{z_0}^u \tilde R(z) dz}_{=:\tilde S(u)}\]
 for a fixed $z_0$ close to $0$.
 Note that $\tilde S$ is well-defined, only the logarithm terms are only unique up to $2 \pi i \mathbb{Z}$. Then the above equation has the form
 \[\frac{d}{dw} S(x(w)) = w^{-1}.\]
 Integrating and then taking the exponential on both sides, we obtain
 \[\frac{x(w)}{z_0}\exp(\tilde S(x(w))) = Aw\]
 for some $A \in \mathbb{C}^*$. But the function $u \mapsto u \exp(\tilde S(u))$ has nonzero derivative at $u=0$, so we can locally find a holomorphic inverse function $T$. We conclude
 \[x(w)= T(A z_0 w).\]
 Going backwards through the argument above one checks that this choice of $x(w)$ then satisfies the original equation (\ref{eqn:xweqn}).

 Thus after a change of coordinates and possibly shrinking $U$, we can assume that $\eta_x, \eta_y$ are given by
 \[\eta_x = a \left( \frac{dx}{x}\right)^k, \eta_y = (-1)^k a \left( \frac{dy}{y}\right)^k.\]
 On the fibre $\mathcal{U}_t = V(xy-t) \subset \mathbb{C}^2$ for $t$ fixed we have $0=d(xy-t) = x dy + y dx$, so $dx/x = - dy/y$ if $x\neq 0, y \neq 0$. Using this, we see that the meromorphic differentials $a \left( \frac{dx}{x}\right)^k, (-1)^k a \left( \frac{dy}{y}\right)^k$ on $\mathbb{C}^2 \times \mathbb{D}$ restrict to the same relative $k$-differential on $\mathcal{U} \setminus \{(0,0,0)\} \to \mathbb{D}$. As $\mathcal{U}$ is isomorphic to an open subset of $\mathbb{C}^2$ and as $(0,0,0)$ has codimension $2$, this extends to a unique relative differential $\eta_{\mathcal{U}}$ extending $\eta_{U}$ on the central fibre by the second Riemann extension theorem. 
 
 Note that in the case $k=0$, which was omitted so far, the argument is even easier: for $\eta_x, \eta_y$ the restrictions of $\eta_U$ to the $x$ and $y$-axis as above, we note that the function $(x,y,t) \mapsto \eta_x(x) + \eta_y(y) - a$ on $\mathbb{C}^2 \times \mathbb{D}$ restricts to a function $\eta_{\mathcal{U}}$ on $\mathcal{U}$ extending $\eta_U$ on the central fibre. 
 
 We can now verify that the pullback of $\eta_{\mathcal{U}}$ by $i_{\mathcal{U}} : (A_x \amalg A_y) \times \mathbb{D} \to \mathcal{U}$ is independent of the point $t \in \mathbb{D}$. This shows that we can glue this $k$-differential in the family $\mathcal{C}$ and that for all $t \in \mathbb{D}$ we obtain a differential as desired on the partial normalization of the fibres $\mathcal{C}_t$. But for instance for the restriction of $i_{\mathcal{U}}$ to $A_x \times \mathbb{D}$ given by $(x,t) \mapsto (x,t/x,t)$ we have that 
 \[i_{\mathcal{U}}^* \eta_{\mathcal{U}} = i_{\mathcal{U}}^* a \left( \frac{dx}{x}\right)^k = a \left( \frac{dx}{x}\right)^k\]
 is indeed independent of $t$ (and similarly for $A_y \times \mathbb{D}$). Again the case $k=0$ is even more obvious.
 
%
\end{proof}

\begin{Cor} \label{Cor:irrsmooth}
 Let $g,n,k \geq 0$ with $2g-2+n>0$, $\mu$ a partition of $k(2g-2)$ and $(C,p_1, \ldots, p_n) \in \widetilde{\mathcal H}_{g}^k(\mu)$. Then if $C$ is irreducible, we have $(C,p_1, \ldots, p_n) \in \overline{\mathcal H}_{g}^k(\mu)$.
\end{Cor}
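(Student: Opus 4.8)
The plan is to argue by induction on the number $\delta$ of nodes of $C$, smoothing the nodes one at a time by means of the Proposition above and then invoking the fact that $\overline{\mathcal H}_{g}^k(\mu)$ is closed in $\overline{\mathcal{M}}_{g,n}$. The key observation that makes this work is that because $C$ is irreducible, its dual graph has a single vertex, so \emph{every} node $q$ of $C$ is a self-node whose two branches lie on the same (and only) component $D=D'$. Hence for any twist $I$ exhibiting $(C,p_1,\dots,p_n)\in\widetilde{\mathcal H}_{g}^k(\mu)$ one automatically has $I(q,D)=I(q,D')=0$; in other words, every node of $C$ is an untwisted node in the sense of the Proposition, so its hypothesis is satisfied at each node of $C$.

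For the base case $\delta=0$ the curve $C$ is smooth, so $(C,p_1,\dots,p_n)\in\widetilde{\mathcal H}_{g}^k(\mu)\cap\mathcal{M}_{g,n}=\mathcal{H}_g^k(\mu)\subseteq\overline{\mathcal H}_{g}^k(\mu)$ by definition of the closure. For the inductive step assume $\delta\geq 1$ and fix a node $q$ of $C$. Since $q$ is an untwisted node, the Proposition produces a holomorphic family $\pi\colon\mathcal{C}\to\mathbb{D}$ with sections $\sigma_1,\dots,\sigma_n$, central fibre $(\mathcal{C}_0,\sigma_1(0),\dots,\sigma_n(0))\cong(C,p_1,\dots,p_n)$, smoothing the node $q$, and such that the induced analytic map $\varphi\colon\mathbb{D}\to\overline{\mathcal{M}}_{g,n}$ has image in $\widetilde{\mathcal H}_{g}^k(\mu)$. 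Reading off the explicit construction in the proof of the Proposition --- the local model $xy=t$ around $q$, glued to the constant family $V\times\mathbb{D}$ away from $q$ --- the total space $\mathcal{C}$ is a smooth connected surface, and for $t\neq 0$ the fibre $(\mathcal{C}_t,\sigma_1(t),\dots,\sigma_n(t))$ has exactly the $\delta-1$ nodes of $C$ other than $q$. Moreover, since $\pi$ is flat and proper with irreducible central fibre $C$, after shrinking $\mathbb{D}$ every fibre $\mathcal{C}_t$ is again irreducible (equivalently, a smoothing of a self-node of an irreducible curve cannot disconnect it; this is also visible from the irreducibility of the smooth total space $\mathcal{C}$).

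Thus for every $t\in\mathbb{D}\setminus\{0\}$ the point $(\mathcal{C}_t,\sigma_1(t),\dots,\sigma_n(t))$ lies in $\widetilde{\mathcal H}_{g}^k(\mu)$ and is an irreducible curve with $\delta-1$ nodes, so the inductive hypothesis yields $(\mathcal{C}_t,\sigma_1(t),\dots,\sigma_n(t))\in\overline{\mathcal H}_{g}^k(\mu)$. Since $\overline{\mathcal H}_{g}^k(\mu)$ is closed in $\overline{\mathcal{M}}_{g,n}$ and $\varphi$ is continuous with $\varphi(\mathbb{D}\setminus\{0\})\subseteq\overline{\mathcal H}_{g}^k(\mu)$, the preimage $\varphi^{-1}\big(\overline{\mathcal H}_{g}^k(\mu)\big)$ is closed and dense in $\mathbb{D}$, hence all of $\mathbb{D}$; letting $t\to 0$ we get $(C,p_1,\dots,p_n)=\varphi(0)\in\overline{\mathcal H}_{g}^k(\mu)$, which completes the induction.

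I expect the only delicate points to be the two facts about the smoothing family used in the inductive step, namely that exactly one node is removed and that the general fibre stays irreducible. Both are immediate from the construction in the proof of the Proposition: $V$ contains all nodes of $C$ except $q$ and the model $xy=t$ is smooth for $t\neq 0$, which gives the first; and $\mathcal{C}$ is a smooth connected surface, hence irreducible, which gives the second (irreducibility of the fibres being an open condition for a flat proper family of curves). Once these are in place the argument is a short induction closed off by the fact that $\overline{\mathcal H}_{g}^k(\mu)$ is closed.
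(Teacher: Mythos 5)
Your proof is correct and follows essentially the same route as the paper's own (one-line) argument: induction on the number $\delta$ of nodes, observing that all nodes of an irreducible curve are self-nodes and hence untwisted, smoothing one node via the Proposition, and closing off with the closedness of $\overline{\mathcal H}_{g}^k(\mu)$. You merely spell out details the paper leaves implicit (that the general fibre has $\delta-1$ nodes and stays irreducible, the latter being the standard openness of geometric integrality in flat proper families --- note that irreducibility of the smooth total space alone would not suffice, cf.\ a pencil of conics), so no changes are needed.
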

\begin{proof}
 We do an induction on the number $\delta$ of nodes of $C$, which are always self-nodes, hence untwisted. The case $\delta=0$ is clear and the case $\delta \geq 1$ follows from the statement for $\delta-1$ as we can smooth $C$ in a family with general element having $\delta-1$ nodes. 
\end{proof}
The case $k=1$ and $\mu$ holomorphic of this result was proved in \cite[Lemma 12]{fp}.

\end{appendix}

\bibliographystyle{alpha}
\bibliography{Biblio}

\end{document}